\definecolor{linkblue}{RGB}{1,1,190}
\definecolor{citegreen}{RGB}{1,170,40}
\def\semidirprod{\begin{picture}(8,8)\qbezier(2,0.5)(5,3.5)(8,6.5)\qbezier(2,6.5)(5,3.5)(8,0.5)\put(2,0.5){\line(0,1){6}}\end{picture}}
\newcommand{\tilT}{{\widetilde{T}}}
\newcommand{\tilX}{{\widetilde{X}}}
\newcommand{\tilY}{{\widetilde{Y}}}
\DeclareMathOperator{\End}{End}
\DeclareMathOperator{\Ext}{Ext}
\DeclareMathOperator{\Hom}{Hom}
\DeclareMathOperator{\mmod}{mod}
\DeclareMathOperator{\add}{add}
\DeclareMathOperator{\ind}{ind}
\newcommand{\ra}{\rightarrow}
\newcommand{\la}{\leftarrow}
\newcommand{\xra}{\xrightarrow}
\newcommand{\Aa}{\mathbb{A}}
\newcommand{\tAa}{\tilde{\mathbb{A}}}
\newcommand{\Aaa}{\mathcal{A}}
\newcommand{\Bb}{\mathcal{B}}
\newcommand{\Cc}{\mathcal{C}}
\newcommand{\Bp}{\mathcal{B}_p}
\newcommand{\Bq}{\mathcal{B}_q}
\newcommand{\Db}{\mathcal{D}^b}
\newcommand{\Tt}{\mathcal{T}}
\newcommand{\Rr}{\mathcal{R}}
\newcommand{\Ss}{\mathcal{S}}
\newcommand{\Uu}{\mathcal{U}}
\newtheorem{teo}{Theorem}[section]
\newtheorem{lema}[teo]{Lemma}
\newtheorem{teorema}[teo]{Theorem}
\newtheorem{proposicion}[teo]{Proposition}
\newtheorem{ejemplo}[teo]{Example}
\newtheorem{remark}[teo]{Remark}
\newtheorem{notation}[teo]{Notation}
\newtheorem{definicion}[teo]{Definition}
\newtheorem{teorema*}{Theorem}
\newtheorem{corolario*}{Corollary}
\begin{document}

\title{$m$-cluster tilted algebras of Euclidean type}
\author{Elsa Fern\'andez, Ana Garcia Elsener \and Sonia Trepode}



\begin{abstract}

  We consider $m$-cluster tilted algebras arising from quivers of Euclidean type and we give necessary and sufficient conditions for those algebras to be representation finite. For the case $\tAa$, using the geometric realization, we get a description of representation finite type in terms of $(m+2)$-angulations. We establish which $m$-cluster tilted algebras arise at the same time from quivers of type $\tAa$ and $\Aa$. Finally, we characterize representation infinite $m$-cluster tilted algebras arising from a quiver of type $\tAa$, as $m$-relations extensions of some iterated tilted algebra of type $\tAa$.
\end{abstract}

\maketitle
\begin{center}
\textit{Dedicated to Jos\'e Antonio de la Pe\~na on the occasion of his 60th birthday}
\end{center}

\section{Introduction}\label{section introduction}
In this work we deal with $m$-cluster tilted algebras that were introduced by H. Thomas in \cite{T}. This class of algebras has been studied by several authors in the last years, see for example \cite{Ba}, \cite{Mu}, \cite{Tol}, \cite{Gub}, \cite{FPT}. We start our work with two very surprising facts about $m$-cluster tilted algebras for  $m$ greater or equal than two, that is, the type $Q$ is not well defined and the representation type is not preserved. This two facts hold true for cluster tilted algebras. Both results were proven by A. Buan, R. Marsh and I. Reiten in \cite{BMR} \cite{BMR2}.

In \cite{Gub}, V. Gubitosi applies the geometric model introduced by H. A. Torkildsen \cite{Tol} to describe the quivers and relations of $m$-cluster tilted algebras arising from a quiver of type $\tAa$. She shows examples of $m$-cluster tilted algebras arising at the same time from quivers of type $\tAa$ y $\Aa$. These examples show that the type $Q$ is not well defined for $m$-cluster tilted algebras for $m$ greater or equal than two. On the other hand, these $m$-cluster tilted algebras arising for quivers of type $\tAa$ and $\Aa$ are of finite representation type, showing that the representation type is not preserved.

Under this motivation we obtain our main result. In a first step we give a complete homological example illustrating the situation in \cite{Gub}. On the other hand, we prove that every iterated tilted algebra is an $m$-cluster tilted algebra for some $m$. Looking at the example \ref{Ejemplo motivacion}, we observe a similar behavior that the one occurring in the case of representation finite iterated tilted algebras of Euclidean type, see \cite{AHT}. In this paper, the authors show that representation finite iterated tilted algebra of Euclidean type  are characterized as the ones obtained as an endomorphism ring of a tilting complex with non zero direct summands in at least two different transjective components of the Auslander-Reiten quiver of its derived category. We obtain here a similar result. We characterize representation finite $m$-cluster tilted algebras in terms of the position of the direct summands of the $m$-cluster tilting object.

We denote by $\Cc^m(H)$ the $m$-cluster category defined by a hereditary algebra $H$, and by $\Gamma (\Cc^m(H))$ the Auslander-Reiten quiver of this category. See these definitions and the definition of $m$-cluster tilting object in section \ref{section-prelim}.

We are now in a position to state our main theorem.

\begin{teorema*} (Theorem \ref{theorem-main})
Let $\Cc^m(H)$ be the $m$-cluster category of Euclidean type and let $A= \End_{\Cc^m(H)}(\tilT)$ be an $m$-cluster tilted algebra. Then, $A$ is of finite representation type if and only if the $m$-cluster tilting object $\tilT$ has summands in at least two different transjective components of $\Gamma(\Cc^m (H))$.
\end{teorema*}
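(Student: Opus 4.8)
The plan is to exploit the structure of the $m$-cluster category of Euclidean type, whose Auslander--Reiten quiver $\Gamma(\Cc^m(H))$ decomposes into transjective (transjective = non-regular) components together with tubular (regular) components. The key dichotomy is where the summands of the $m$-cluster tilting object $\tilT$ sit. I would organize the proof around the contrapositive for one direction and a direct geometric/homological argument for the other.

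\medskip

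\textbf{The ``only if'' direction (finite $\Rightarrow$ summands in two transjective components).} I would argue by contraposition: suppose all summands of $\tilT$ lie in a single transjective component. In the classical ($m=1$) cluster category of Euclidean type there is essentially one transjective component (the image of the preprojective/preinjective components glued by $\tau$), so the condition ``summands in at least two different transjective components'' is genuinely an $m\geq 2$ phenomenon where the shift of the derived category splits the transjective part into several copies. First I would recall that when $\tilT$ is concentrated in one transjective component, the endomorphism algebra $A=\End_{\Cc^m(H)}(\tilT)$ is (derived equivalent to, or directly) a tilted algebra of the Euclidean type $\tAa$, and such a tilted algebra is representation infinite whenever it is not of Dynkin type. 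The cleanest route is to show that concentration in one transjective component forces $A$ to admit a slice whose type is the underlying Euclidean diagram, hence $A$ is representation infinite. This uses the correspondence, available in this setting, between $m$-cluster tilting objects and tilting objects in the bounded derived category $\Db(H)$ together with the shift functor.

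\medskip

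\textbf{The ``if'' direction (summands in two transjective components $\Rightarrow$ finite).} Here I would use the geometric realization for type $\tAa$ and the $m$-relation extension description announced in the abstract. If $\tilT$ has summands genuinely spread across two distinct transjective components, then passing through the shift the induced tilting object in $\Db(H)$ has summands in at least two different transjective components of $\Gamma(\Db(H))$, and by the cited result of Assem--Happel--Trepode \cite{AHT} the corresponding iterated tilted algebra is representation finite. The bridge is that an $m$-cluster tilted algebra coincides, up to the $m$-relation extension, with an iterated tilted algebra of the same Euclidean type, so representation finiteness transfers. I would make this precise by checking that the relation extension does not change the representation type in the relevant cases, reducing the statement to \cite{AHT}.

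\medskip

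\textbf{Main obstacle.} The hard part will be the careful bookkeeping of how the shift functor of $\Db(H)$ partitions the transjective part of $\Gamma(\Cc^m(H))$ into components, and in translating ``summands in two different transjective components of the $m$-cluster category'' into the AHT criterion ``summands in two different transjective components of the derived category.'' For $m\geq 2$ the fundamental domain for the $m$-cluster category is larger, so several derived-category components collapse or re-glue, and I must verify that the two-component condition is preserved in both directions of this identification. Controlling this combinatorics precisely, and confirming that the $m$-relation extension preserves representation type, is where the real work lies; the Dynkin-versus-Euclidean slice argument and the appeal to \cite{AHT} are then comparatively routine.
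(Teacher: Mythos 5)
Your proposal has genuine gaps in both directions, and in each case the step that fails is the reduction to previously known results, not the component bookkeeping you flag as the main obstacle. In the contrapositive of the ``only if'' direction you assume that \emph{all} summands of $\tilT$ lie in a single transjective component, but the negation of the theorem's condition only says that all \emph{transjective} summands lie in one component: in Euclidean type an $m$-cluster tilting object typically also has regular summands in the tubes (the paper writes $\tilT=\tilT_k\oplus\tilT_l$ with $\tilT_l$ regular in Proposition \ref{repinfinite}). Once regular summands are present, $\End_{\Cc^m(H)}(\tilT)$ need not be tilted, need not admit a complete slice, and need not even be iterated tilted (by Theorem \ref{teorema-extensiones} it may only be an $m$-relation extension of an iterated tilted algebra), so the slice argument does not get off the ground. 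The paper's proof avoids this entirely: it fixes a homogeneous tube $\Tt_\lambda$, uses Serre duality, the structure of $\Db(H)$ and Remark \ref{minimalrepinf} to check $\Hom_{\Cc^m(H)}(\tilT,\tilX[i])=0$ for $1\le i\le m-1$ for almost all $\tilX\in\Tt_\lambda$, concludes via the approximation triangle of Lemma \ref{isomorfismo} that the cone $\tilde{C}$ lies in $\add\tilT$, hence $\tilX\in\Uu=\add T\ast\add T[1]$, and then invokes the Keller--Reiten equivalence $\Uu/(\add T[1])\simeq\mmod A$ (Lemma \ref{equivalencia}) to produce infinitely many indecomposable $A$-modules.

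The ``if'' direction is where your route breaks outright. The bridge ``an $m$-cluster tilted algebra is, up to $m$-relation extension, an iterated tilted algebra of the same type, and relation extension preserves representation type'' is false on both counts. First, the realization as an $m$-relation extension (Theorem \ref{teorema-extensiones}) is proved only for \emph{representation infinite} algebras of type $\tAa$, and the paper exhibits a representation finite $m$-cluster tilted algebra of type $\tAa$ that is triangular (hence not a trivial extension) and not iterated tilted of any type (Example \ref{ejemplo 10}); so precisely in the case you need, the bridge does not exist. Second, trivial extensions do not preserve representation type: already for $m=1$, the relation extension of a representation finite tilted algebra of Euclidean type is a cluster tilted algebra of Euclidean type, which is representation infinite. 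Third, the geometric realization you invoke covers only type $\tAa$, whereas the theorem concerns all Euclidean types. The paper's Proposition \ref{repfinite} instead argues directly in $\Cc^m(H)$: using two transjective summands in different components, Serre duality, and Remark \ref{minimalrepinf}, it shows $\Hom_{\Cc^m(H)}(\tilT,\tilX[i])\neq 0$ for some $i$ with $1\le i\le m-2$ for almost all $X$ in the fundamental domain, so by Lemma \ref{isomorfismo} the cone of the right $\add\tilT$-approximation is not in $\add\tilT$, hence $\tilX\notin\Uu$; the Keller--Reiten equivalence then yields only finitely many indecomposables in $\mmod A$. The result of \cite{AHT} serves the paper only as inspiration for this strategy, not as an ingredient of the proof.
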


In section \ref{section A tilde}, we prove that the main theorem can be obtained from the geometric-combinatoric point of view in the case of $m$-cluster tilted algebras arising from a quiver of type $\tAa$. We recall the geometric realizations for the $m$-cluster categories and $m$-cluster tilted algebras arising from quivers of types $\Aa$ and $\tAa$ given in \cite{Ba,Mu,Tol,Gub}. We characterize which $m$-cluster tilted algebras arise at the same time from quivers of Dynkin type $\Aa$ and Euclidean type $\tAa$. This depends only on the existence of a root cycle, see Definition \ref{root cycle} and \cite[Def. 7.2]{Gub}. More precisely, we have the following result.

\begin{teorema*} (Theorem \ref{teorema A})
Let $A= k Q / I$ be an $m$-cluster tilted algebra arising from a quiver of type $\tAa$. Then, $A$ is an  $m$-cluster tilted algebra arising simultaneously from a quiver of type $\Aa$ if and only if $Q$ does not have a root cycle.
\end{teorema*}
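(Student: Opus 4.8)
The plan is to argue entirely within the geometric realizations recalled at the start of Section~\ref{section A tilde}: an $m$-cluster tilted algebra of type $\tAa$ is read off from an $(m+2)$-angulation of an annulus with marked points on both boundary components, while one of type $\Aa$ is read off from an $(m+2)$-angulation of a disk (polygon). In both cases the vertices of $Q$ are the internal arcs of the dissection, the arrows record the counterclockwise-consecutive angles inside each $(m+2)$-gon, and the relations $I$ are determined by the faces. The key translation I would establish first is a geometric description of the root cycle of Definition~\ref{root cycle}: a root cycle $\rootcycle$ in $Q$ corresponds precisely to a cyclic chain of faces of the annulus dissection that together encircle the inner boundary component, i.e.\ it is the combinatorial shadow of the nontrivial loop in the annulus around the hole.

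With this dictionary in place, the necessity direction is the easier one. Suppose $A=kQ/I$ also arises from a disk dissection $\mathcal{D}'$ of type $\Aa$. Since the disk is simply connected, every oriented cycle in the quiver read from $\mathcal{D}'$ bounds a union of internal faces and is therefore killed by the relations; in particular there is no cyclic chain of faces encircling a hole, because there is no hole. By the geometric characterization of $\rootcycle$ this says exactly that $Q$ has no root cycle. I would phrase this as a preliminary lemma asserting that the quiver of any $m$-cluster tilted algebra of type $\Aa$ is root-cycle-free, proved by inspecting the face structure of an arbitrary polygon $(m+2)$-angulation.

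For sufficiency, assume $Q$ has no root cycle and let $\mathcal{D}$ be an annulus dissection realizing $A$. The absence of a root cycle means no chain of faces of $\mathcal{D}$ encircles the inner boundary, so there is an admissible bridging arc along which the annulus can be cut open and unfolded into a disk; cutting along it turns $\mathcal{D}$ into an $(m+2)$-angulation $\mathcal{D}'$ of a polygon. I would then check, arc by arc and face by face, that the bound quiver read from $\mathcal{D}'$ is isomorphic to $(Q,I)$: internal arcs and faces not meeting the cut are untouched, and the no-root-cycle hypothesis guarantees that the arcs crossing the cut are duplicated and relabeled consistently, without creating or destroying arrows or relations. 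This exhibits $A$ as an $m$-cluster tilted algebra of type $\Aa$.

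The main obstacle is precisely this cut-and-unfold construction in the sufficiency direction: one must produce the cutting arc canonically from the root-cycle-free dissection and verify that the reading rules for arrows and relations are invariant under the unfolding, with special care for the bridging arcs joining the two boundary components and for the faces they border. Establishing the geometric interpretation of $\rootcycle$ cleanly enough to drive both directions is, I expect, the conceptual heart of the argument.
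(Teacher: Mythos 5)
Your overall strategy --- read everything geometrically, prove necessity via simple connectivity of the disk, and prove sufficiency by cutting the annulus open into a polygon --- is exactly the paper's route, so the comparison comes down to how the cut is made, and there your sketch has a genuine gap. You propose to cut along a ``bridging arc'' and claim that the arcs crossing the cut are ``duplicated and relabeled consistently, without creating or destroying arrows or relations.'' This cannot work as stated: every duplicated arc produces an extra vertex of the quiver (and if the bridging arc is itself an $m$-diagonal of $\Tt$, it becomes a boundary edge after cutting and its vertex disappears), so the bound quiver of the resulting polygon dissection would not be isomorphic to $(Q_\Tt,I_\Tt)$. The correct construction, which is the heart of the paper's proof, chooses the cut so that it crosses \emph{no} $m$-diagonal at all: one first establishes that $Q_\Tt$ has no root cycle if and only if the $(m+2)$-angulation contains a face $\Pi_0$ having boundary edges on both $\Bp$ and $\Bq$ (this is where Lemma \ref{lema trans} enters, producing the two transjective edges $x_1,x_2$ of $\Pi_0$, which moreover lie in different transjective components), and then cuts along an auxiliary arc $a$ drawn through the interior of $\Pi_0$; note that $a$ need not be an $m$-diagonal. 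Since the cut is disjoint from every arc of $\Tt$, nothing is duplicated; the only repair needed is to add marked points on the two new boundary edges $a,a'$ so that the two halves of $\Pi_0$ become admissible polygons and the total is an $(nm+2)$-gon, after which the bound quiver is literally unchanged. Your ``cyclic chain of faces encircling the hole'' dictionary is the correct dual formulation of this equivalence, but without isolating the face $\Pi_0$ you have no admissible place to put the cut, and your fallback of cutting through arcs destroys the quiver --- so the main obstacle you flag is not merely technical bookkeeping; it is resolved only by the $\Pi_0$ lemma.

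A secondary point: in the necessity direction you only treat \emph{oriented} cycles (``every oriented cycle \dots bounds a union of internal faces and is therefore killed by the relations''), but a root cycle in the sense of Definition \ref{root cycle} is any closed \emph{reduced walk} that is not a saturated cycle --- for instance the zig-zag $1\ra 3\la 4\ra 5\la 1$ of Example \ref{ejemplo1} --- so non-oriented closed reduced walks must be excluded as well. The clean argument, which the paper implicitly relies on, is that the dual graph of a disk dissection is a tree, whence the only closed reduced walks in the quiver of a polygon $(m+2)$-angulation are the saturated cycles around internal $(m+2)$-gons.
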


We observe that this class of algebras are not all the representation finite $m$-cluster tilted algebras arising from a quiver of type $\tAa$.  We have from \cite[Theorem 7.16]{Gub} that there exist derived discrete algebras which appear as representation finite $m$-cluster tilted algebras arising from a quiver of type $\tAa$. Indeed, consider the algebra given by a root cycle where the number of clockwise oriented relations is not equal to the number of counterclockwise oriented relations and these numbers are congruent module $m$. This algebra is $m$-cluster tilted by \cite{Gub} and it is derived discrete by \cite{BGS}. That shows that algebras having different derived categories appear as $m$-cluster tilted algebras.

In subsection \ref{repre finite}, we focus on the representation infinite $m$-cluster tilted algebras arising from a quiver of type $\tAa$. In \cite{FPT} it was proven that every iterated tilted algebra, depending of its global dimension, induces an $m$-cluster tilted algebra which is a higher relation extension, in the sense of \cite{AGS}.  In \cite{FPT}, the authors asked when an $m$-cluster tilted algebra can be realized as a higher relation extension. In this work we prove that representation infinite $m$-cluster tilted algebras arising from a quiver of type $\tAa$, which are non-triangular algebras, can be realized as $m$-relation extensions. More precisely, we prove that a representation infinite $m$-cluster tilted algebras arising from a quiver of type $\tAa$ either is an iterated tilted algebra or an $m$-relation extension of an iterated tilted algebra. 
We have the following theorem.

\begin{teorema*} (Theorem \ref{teorema-extensiones})
Let $A$ be a connected algebra of infinite representation type. The following are equivalent.
\begin{itemize}
\item[(a)] $A$ is an $m$-cluster tilted algebra arising from a quiver of type $\tAa$.
\item[(b)] Is one of the following algebras
\begin{itemize}
\item [(i)] $A$ is an iterated tilted algebra of type $\tAa$, or
\item [(ii)] $A = \mathcal{R}_m(B)=B \semidirprod  \Ext^{m+1}_B(DB,B)$, is the $m$-relation extension of the algebra $B$, where $B = \End_{\Db(H)} (T)$ is an iterated tilted algebra of type $\tAa$ of global dimension $m+1$, with $T$ a tilting complex in the fundamental domain of $\Cc^m (H)$.
\end{itemize}
\end{itemize}
  \end{teorema*}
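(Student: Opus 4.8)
The plan is to prove the two implications by lifting the $m$-cluster tilting object $\tilT$ to a tilting complex in the fundamental domain of $\Db(H)$, reading off the dichotomy in (b) from the global dimension of the resulting iterated tilted algebra, and matching this against the root cycle description of Section~\ref{section A tilde}.

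For the implication (b)$\Rightarrow$(a), in case (i) I would invoke the fact, already recorded in the introduction and in \cite{FPT}, that every iterated tilted algebra of type $\tAa$ is an $m$-cluster tilted algebra for a suitable $m$; since the orbit construction does not alter the underlying hereditary type, such an $A$ is $m$-cluster tilted of type $\tAa$, consistently with the standing hypothesis as iterated tilted algebras of Euclidean type are tame. In case (ii) I would apply the result of \cite{FPT} directly: if $B=\End_{\Db(H)}(T)$ is iterated tilted of global dimension $m+1$ with $T$ a tilting complex in the fundamental domain of $\Cc^m(H)$, then its $m$-relation extension $\mathcal{R}_m(B)=B\semidirprod\Ext^{m+1}_B(DB,B)$ is isomorphic to $\End_{\Cc^m(H)}(\tilT)$ for $\tilT$ the image of $T$; this is $m$-cluster tilted of type $\tAa$, its representation infiniteness being the standing hypothesis on $A$.

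For the implication (a)$\Rightarrow$(b), I start from a representation infinite $m$-cluster tilted algebra $A=\End_{\Cc^m(H)}(\tilT)$ of type $\tAa$. By Theorem~\ref{theorem-main}, $\tilT$ has summands in at most one transjective component of $\Gamma(\Cc^m(H))$, and I would use this concentration to lift $\tilT$ to a basic tilting complex $T$ inside the fundamental domain, setting $B=\End_{\Db(H)}(T)$, an iterated tilted algebra of type $\tAa$ of global dimension at most $m+1$. By \cite{FPT,AGS} the morphisms in $\End_{\Cc^m(H)}(\tilT)$ coming from the nontrivial orbit shift assemble into the bimodule $\Ext^{m+1}_B(DB,B)$, so $A\cong\mathcal{R}_m(B)$. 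This bimodule vanishes exactly when the global dimension of $B$ is at most $m$, giving $A\cong B$ and case (i); when it equals $m+1$ the bimodule is nonzero and we land in case (ii). To decide which case occurs I would pass to the geometric model, where type $\tAa$ $m$-cluster tilted algebras correspond to $(m+2)$-angulations of the annulus and the presence or absence of a root cycle (Definition~\ref{root cycle}, \cite[Def.~7.2]{Gub}) separates the non-triangular algebras from the triangular ones: a triangular $A$ has directed quiver and global dimension at most $m$, hence $A=B$ is iterated tilted (i), while a non-triangular $A$ carries a root cycle forcing global dimension $m+1$, whose completing arrow is realized as the generator of $\Ext^{m+1}_B(DB,B)$, hence $A=\mathcal{R}_m(B)$ (ii).

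The hard part will be the direction (a)$\Rightarrow$(b), and within it the non-triangular case. I must show that cutting the root cycle produces a genuine iterated tilted algebra $B$ of type $\tAa$ of global dimension exactly $m+1$, and then identify the arrow completing the cycle with a basis of the top extension group $\Ext^{m+1}_B(DB,B)$, so that $A=\mathcal{R}_m(B)$ holds on the nose rather than merely up to Morita equivalence. Checking that the fundamental domain lift $T$ is actually a tilting complex, using the at most one transjective component hypothesis supplied by Theorem~\ref{theorem-main}, and computing this top extension group from the explicit relations read off the $(m+2)$-angulation, is where the main technical effort will lie.
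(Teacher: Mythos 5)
Your proposal diverges from the paper's proof in the crucial direction (a)$\Rightarrow$(b), and the divergence hides a genuine gap: the step ``lift $\tilT$ to a basic tilting complex $T$ inside the fundamental domain'' is not available. By \cite[Proposition 2.4]{BRT} (quoted in the paper), the representative of an $m$-cluster tilting object in the fundamental domain is only a \emph{silting} complex, and in general it is not tilting: as soon as $T$ has summands $Z[i]$ and $W[i+1]$ with $\Hom_H(Z,W)\neq 0$, one gets $\Hom_{\Db(H)}(T,T[-1])\neq 0$, which happens precisely when the summands of $\tilT$ spread over several shifts, i.e.\ in the situations the theorem is about. Consequently $\End_{\Db(H)}(T)$ for this $T$ need not be iterated tilted, and the decomposition of $\End_{\Cc^m(H)}(\tilT)$ into $B\oplus\Ext^{m+1}_B(DB,B)$ via \cite{FPT,AGS} cannot be applied to it; indeed, deciding when an $m$-cluster tilted algebra is a higher relation extension is exactly the open question from \cite{FPT} that this theorem answers, so it cannot be taken as the engine of the proof. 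The paper never lifts $\tilT$ at all: its proof of (a)$\Rightarrow$(b) is combinatorial, starting from the gentle presentation of Remark \ref{quiverconrelaciones} (from \cite{Gub} combined with \cite{BuR}), splitting according to whether the quiver of $A$ contains saturated cycles, and in the non-triangular case defining $B$ as the admissible cut, in the sense of \cite{BFPPT}, obtained by deleting exactly one arrow in each saturated cycle; then \cite{AH}, \cite[Theorem A]{AS}, \cite{Mu} and \cite[Remark 4.3]{FPT} show that $B$ is iterated tilted of type $\tAa$ of global dimension $m+1$ and that $\mathcal{R}_m(B)=A$. Theorem \ref{theorem-main} plays no role in the paper's argument here.

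Your structural picture of the two cases is also off. By Remark \ref{quiverconrelaciones}, \emph{every} representation infinite $m$-cluster tilted algebra of type $\tAa$ contains a relation-free root cycle --- a non-oriented closed walk, present also in the triangular, iterated tilted algebras of case (i) --- so ``carries a root cycle'' does not separate (ii) from (i); the correct dichotomy is the presence of saturated \emph{oriented} cycles. Moreover the root cycle is never cut: it survives into $B$ and is precisely what makes $B$ of type $\tAa$ rather than $\Aa$ (the paper cuts it only as an auxiliary step, to analyze the type-$\Aa$ factors $B_i=\mathcal{R}_m(C_i)$). The arrows recovered by the bimodule $\Ext^{m+1}_B(DB,B)$ are the deleted arrows of the saturated cycles, one per saturated cycle and possibly many, not a single ``completing arrow'' of the root cycle; and it is the relations on the saturated cycles, not the relation-free root cycle, that force the global dimension of $B$ up to $m+1$. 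Finally, in (b)$\Rightarrow$(a)(i) your appeal to the proposition that an iterated tilted algebra is $r$-cluster tilted for all sufficiently large $r$ gives the wrong quantifier (some $r$, rather than the given $m$); the paper derives both subcases of (b)$\Rightarrow$(a) from \cite[Theorem 2.5]{FPT}.
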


We observe that this result is not true for the representation finite case. Indeed, if $A$ is a representation finite $m$-cluster tilted algebra arising from a quiver of type $\tAa$, see example \ref{ejemplo 10}, we observe that $A$ is not a trivial extension since it is a triangular algebra and $A$ is not an iterated tilted algebra of any type.

Finally, we give some examples that show the different kinds of algebras that we get as $m$-cluster tilted algebras arising from a quiver of type $\tAa$.

  \section{Preliminaries}\label{section-prelim}

 Consider the derived category $\Db(H)$, where $H=kQ$ is the path algebra of an acyclic quiver $Q$. The \emph{Serre duality} holds in
$\Db(H)$, that is there is a bifunctorial isomorphism
\begin{equation}
\Hom_{\Db(H)}(X, \tau Y)\simeq D \Hom_{\Db(H)}(Y,X[1])
\end{equation}
for all $X,Y \in \Db(H)$, where we denote by $[1]$ the shift functor and by $\tau $ the Auslander-Reiten translation.

The \emph{cluster category} $\Cc(H)$ was defined in \cite{BMRRT}. Later, for a positive integer $m$, the \emph{$m$-cluster category} $\Cc^m(H)$ was defined in \cite{T}. Let $F_m$ be the functor $\tau^{-1}[m]$ defined over $\Db(H)$, then the objects in $\Cc^m(H)$ are the $F_m$-orbits $\tilX$, and the morphisms are given by

\begin{equation}
\Hom_{\Cc^m (H)}(\tilX, \tilY)=\bigoplus_{i\in \mathbb{Z}} \Hom_{\Db (H)}(X,F^i_m Y).
\end{equation}

Given a Krull--Schmidt category $\Cc$, such as $\mmod H$ or $\Cc^m(H)$, we denote by $\ind \Cc$ the set of indecomposable objects over $\Cc$. For an object $X \in \Cc$, we denote by $\add X$ the full subcategory with objects obtained as direct sums of direct summands of $X$. The categories we study have an associated  Auslander--Reiten (AR) quiver, that will be denoted by $\Gamma (\Cc)$. For further information see \cite[I. Section 4.7]{H1}.

The set $\Ss=  \bigcup_{i= 0}^{m-1} \ind \mmod H [i] \bigcup H[m]$ is the \emph{fundamental domain} for the action of $F_m$ in $\Db(H)$.

Let $n$ be the number of non-isomorphic simple $H$-modules, an object $\tilT=\oplus_{i=1}^n \tilT_i$ is \emph{$m$-cluster tilting} if each summand is indecomposable, $\widetilde{T_i} \ncong \widetilde{T_j}$ for $i\neq j$ and $\Ext^i_{\Cc^m(H)} (\tilT,\tilT)=0$ for $1\leq i \leq m$. This definition is not the original due to \cite{T}, is a characterization of $m$-cluster tilting object. See, for instance, \cite{W,ZZ}. Note that, if $\tilT$ is an $m$-cluster tilting object, then  $\add \tilT$ is a cluster tilting subcategory en the sense of \cite{I}. In particular, $\add \tilT$ is functorially finite in $\Cc^m(H)$ and; if for all $X$ in $\Cc^m(H)$ we have that $\Ext^{i}(\tilT', \tilX) = 0$ for all $0< i <m$ and all $\tilT' \in \add \tilT$, then $X$ belongs to $\add \tilT$.

\begin{definicion}
The endomorphism algebra $\End_{\Cc^m(H)}(\tilT)$ of an $m$-cluster tilting object $\tilT$ is called an \emph{$m$-cluster tilted algebra}. We say that such $m$-cluster tilted algebra arises from a quiver of type $Q$ if $H= kQ$.
\end{definicion}

We recall that if $\Aaa$ and $\Bb$ are classes of objects in $\Cc^m(H)$, then $\Aaa \ast \Bb $ denotes the full subcategory of objects $X$ in $\Cc^m(H)$ appearing in a triangle $A \ra X \ra B \ra A[1]$, where $A \in \Aaa$ and $B \in \Bb$.

\begin{notation} \label{remark U}
Consider $\Uu = \add T \ast \add T[1]$. In other words,
$$\Uu = \{\tilX \in \Cc^m(H)/ \mbox{there exists a triangle } \tilT_1 \ra \tilT_0 \ra \tilX \ra \tilde{T_1}[1] \},$$ with $\tilT_0$ and $\tilT_1$ in $\add T$.
\end{notation}

A key result for this work is the following equivalence due to Keller and Reiten, \cite[Section 5.1]{KR}.

\begin{lema} \label{equivalencia} Let $\tilT$ be an $m$-cluster tilting object,
\begin{enumerate}
\item[(a)] For each $\End_{\Cc^m(H)}(\tilT)$-module $M$ there exists a triangle $\tilT_1 \ra \tilT_0\ra \tilX_M \ra \tilT_1 [1] $, with $\tilT_0,\tilT_1 \in \add T$, such that $\Hom_{\Cc^m(H)}(\tilT,\tilX_M)=M$.

\item[(b)] The functor $F=\Hom_{\Cc^m(H)}(\tilT, -)$ induces an equivalence of categories
\begin{equation*}
\Uu /(\add T [1]) \xra{F} \mmod \End_{\Cc^m(H)}(T).
\end{equation*}
 \end{enumerate}

\end{lema}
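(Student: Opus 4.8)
The plan is to exploit that $F=\Hom_{\Cc^m(H)}(\tilT,-)$ is a homological functor sending $\add T$ to the projective modules over $B:=\End_{\Cc^m(H)}(\tilT)$ and $\add T[1]$ to zero. The starting point is the general fact that $F$ restricts to an equivalence $\add T\xra{\sim}\proj B$: for $\tilT_0,\tilT_0'\in\add T$ one has $\Hom_{\Cc^m(H)}(\tilT_0,\tilT_0')\cong\Hom_B(F\tilT_0,F\tilT_0')$ by a Yoneda-type argument, and $F\tilT=B$. Moreover $F(\add T[1])=0$, since for $\tilT',\tilT''\in\add T$ we have $\Hom_{\Cc^m(H)}(\tilT',\tilT''[1])=\Ext^1_{\Cc^m(H)}(\tilT,\tilT)=0$ because $\tilT$ is $m$-cluster tilting and $m\ge 1$. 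Consequently $F$ kills every morphism that factors through $\add T[1]$, so it descends to a functor $\bar F\colon \Uu/(\add T[1])\ra\mmod B$; proving part (b) then amounts to showing that $\bar F$ is dense, full and faithful.

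For part (a), I would take a $B$-module $M$ together with a projective presentation $P_1\xra{\pi}P_0\ra M\ra 0$. Using $\add T\simeq\proj B$, write $P_i=F\tilT_i$ and lift $\pi$ to a morphism $f\colon\tilT_1\ra\tilT_0$ in $\Cc^m(H)$; then complete $f$ to a triangle $\tilT_1\xra{f}\tilT_0\ra\tilX_M\ra\tilT_1[1]$. Applying the homological functor $F$ gives the exact sequence $F\tilT_1\ra F\tilT_0\ra F\tilX_M\ra\Hom_{\Cc^m(H)}(\tilT,\tilT_1[1])$, whose last term vanishes as above, so $F\tilX_M=\operatorname{coker}(\pi)=M$. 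By construction $\tilX_M\in\Uu$, which proves (a) and simultaneously gives density of $\bar F$. Fullness would follow by reversing this construction: given $\tilX,\tilY\in\Uu$ with their defining triangles and a $B$-map $\phi\colon F\tilX\ra F\tilY$, I would lift $\phi$ along the projective presentations to a square between the maps $\tilT_1^X\ra\tilT_0^X$ and $\tilT_1^Y\ra\tilT_0^Y$ that commutes in $\Cc^m(H)$ (commutativity follows because $F$ is faithful on $\add T$), and then invoke axiom (TR3) to extend it to a morphism of triangles whose middle component $g\colon\tilX\ra\tilY$ satisfies $Fg=\phi$.

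The delicate point, and the step I expect to be the main obstacle, is faithfulness: given $g\colon\tilX\ra\tilY$ in $\Uu$ with $Fg=0$, one must show that $g$ factors through $\add T[1]$. Here I would argue by diagram chasing on the two defining triangles. Precomposing $g$ with the projection $p^X\colon\tilT_0^X\ra\tilX$ yields a map $\tilT_0^X\ra\tilY$; since $\Hom_{\Cc^m(H)}(\tilT_0^X,\tilT_1^Y[1])=0$ it lifts to some $\tilde g\colon\tilT_0^X\ra\tilT_0^Y$ with $p^Y\tilde g=g\,p^X$. Then $F(p^Y)\,F(\tilde g)=0$, so $\operatorname{im}F(\tilde g)\subseteq\ker F(p^Y)=\operatorname{im}F(f^Y)$, the last equality coming from the projective presentation $F\tilT_1^Y\ra F\tilT_0^Y\ra F\tilY\ra 0$. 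As $F\tilT_0^X$ is projective, $F(\tilde g)$ factors through $F(f^Y)$, and faithfulness of $F$ on $\add T$ produces $s\colon\tilT_0^X\ra\tilT_1^Y$ with $\tilde g=f^Y s$. Hence $g\,p^X=p^Y\tilde g=p^Y f^Y s=0$, using $p^Y f^Y=0$.

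Finally, $g\,p^X=0$ together with exactness of $\Hom_{\Cc^m(H)}(-,\tilY)$ applied to the rotated triangle $\tilT_0^X\xra{p^X}\tilX\xra{h^X}\tilT_1^X[1]\ra\tilT_0^X[1]$ forces $g$ to factor through the connecting map $h^X\colon\tilX\ra\tilT_1^X[1]$, exhibiting the required factorization through $\tilT_1^X[1]\in\add T[1]$. Assembling density, fullness and faithfulness then yields the equivalence in (b), while (a) has already been established along the way.
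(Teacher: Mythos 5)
Your proof is correct, but note that the paper itself contains no proof of this lemma: it is quoted verbatim from Keller--Reiten \cite[Section 5.1]{KR}, and your argument is essentially a faithful reconstruction of the standard proof from that source (and of the $m=1$ case treated by Buan--Marsh--Reiten): the equivalence $\add \tilT \simeq \proj \End_{\Cc^m(H)}(\tilT)$, the vanishing $F(\add \tilT[1])=0$ from $\Ext^1_{\Cc^m(H)}(\tilT,\tilT)=0$, density via triangles built over lifted projective presentations, fullness via (TR3), and faithfulness via the diagram chase showing $g\,p^X=0$ and hence a factorization through the connecting morphism $h^X\colon \tilX \ra \tilT_1^X[1]$. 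Two cosmetic points: in the faithfulness step, producing $s$ with $\tilde g = f^Y s$ from the module-level factorization uses \emph{fullness} of $F$ on $\add \tilT$ (to realize the module map as $Fs$) and only then faithfulness (to get $\tilde g = f^Y s$ in $\Cc^m(H)$), not faithfulness alone; and at the end of the fullness argument you should record that $Fg=\phi$ follows from $Fg\circ Fp^X=\phi\circ Fp^X$ together with the surjectivity of $Fp^X$, which holds because $F(\tilT_1^X[1])=0$. Neither remark affects the validity of the argument.
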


From now on, let $H$ be a hereditary algebra of Euclidean type.

In this case the AR quiver $\Gamma(\Cc^m(H))$ has $m$ transjective components that we denote $\Cc_0, \ldots, \Cc_{m-1}$ and $m$ sets of regular components denoted by $\Rr_0, \ldots, \Rr_{m-1}$.

\begin{figure}[h!]
\centering
\def\svgwidth{3.3in}
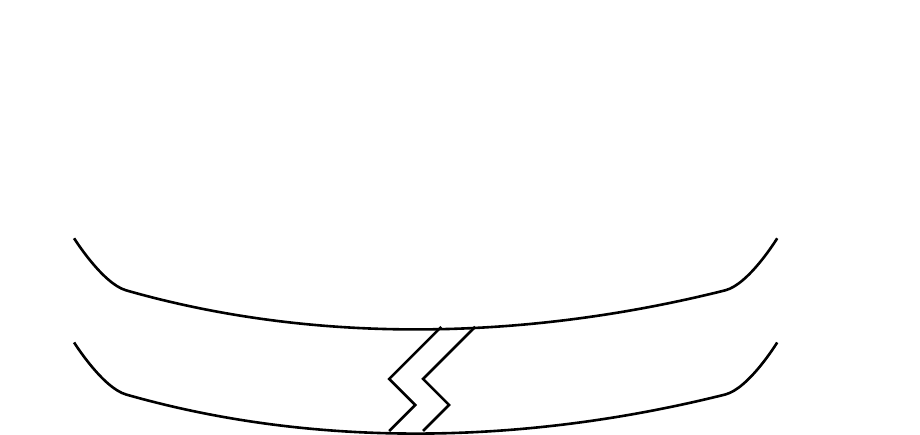
\caption{Auslander--Reiten quiver $\Gamma(\Cc^m(H))$}
\label{categoriamcluster}
\end{figure}

For the proof of our results, will be important to have a good understanding on hereditary Euclidean algebras and their module categories, see \cite{R}.

\section{$m$-cluster tilted algebras}

\subsection{The type $Q$ is not preserved for $m$-cluster tilted algebras.}

In this section we remark that the type $Q$ of an $m$-cluster tilted is not well defined for $m \geq 2$. Gubitosi, in \cite{Gub}, show examples of $m$-cluster tilted algebras  arising from $m$-clusters categories of type $\Aa$ and $\tAa$ at the same time. We illustrate the situation with the following example.

\begin{ejemplo}\label{Ejemplo motivacion} A representation finite $m$-cluster tilted algebra at the same time arising from an $m$-cluster category of type $\tAa$ and from an $m$-cluster category of type $\Aa$.

Let $H$ be the hereditary algebra of type ${\tilde{\mathbb{A}}}_{2,2}$

\begin{center}
\begin{tikzcd}[column sep=small, row sep=small]
&& 2\arrow[rrd]\\
1 \arrow[rru]\arrow[rrd] && && 4\\
&& 3 \arrow[rru]
\end{tikzcd}
\end{center}

Consider the derived category $\Db(H)$  and $\tilT$ a $2$-cluster tilting object in the $2$-cluster category $\Cc^2(H)$, where $T= T_1 \oplus T_2 \oplus T_3 \oplus T_4$ is a representative of $\tilT$ in the fundamental domain $\Ss$ of $\Cc^2(H)$.
The indecomposable summnads are $ T_1= \begin{smallmatrix}&1\\&2\\&4\end{smallmatrix}$  is a regular module in the wide tube $\Tt_1$ of rank $2$, $T_2 = I_4$ is the injective module in the vertex $4$, $T_3 = 2[1]$, where $2$ is the simple regular module in the other wide tube $\Tt_2$  of rank $2$ shifted by one, and $T_4 = I_2[1]$ is the injective module in the vertex $2$ shifted by one.

\begin{figure}[h!]
\centering
\def\svgwidth{5.4in}
\begingroup%
  \makeatletter%
  \providecommand\color[2][]{%
    \errmessage{(Inkscape) Color is used for the text in Inkscape, but the package 'color.sty' is not loaded}%
    \renewcommand\color[2][]{}%
  }%
  \providecommand\transparent[1]{%
    \errmessage{(Inkscape) Transparency is used (non-zero) for the text in Inkscape, but the package 'transparent.sty' is not loaded}%
    \renewcommand\transparent[1]{}%
  }%
  \providecommand\rotatebox[2]{#2}%
  \newcommand*\fsize{\dimexpr\f@size pt\relax}%
  \newcommand*\lineheight[1]{\fontsize{\fsize}{#1\fsize}\selectfont}%
  \ifx\svgwidth\undefined%
    \setlength{\unitlength}{543.74986602bp}%
    \ifx\svgscale\undefined%
      \relax%
    \else%
      \setlength{\unitlength}{\unitlength * \real{\svgscale}}%
    \fi%
  \else%
    \setlength{\unitlength}{\svgwidth}%
  \fi%
  \global\let\svgwidth\undefined%
  \global\let\svgscale\undefined%
  \makeatother%
  \begin{picture}(1,0.17862183)%
    \lineheight{1}%
    \setlength\tabcolsep{0pt}%
    \put(0,0){\includegraphics[width=\unitlength,page=1]{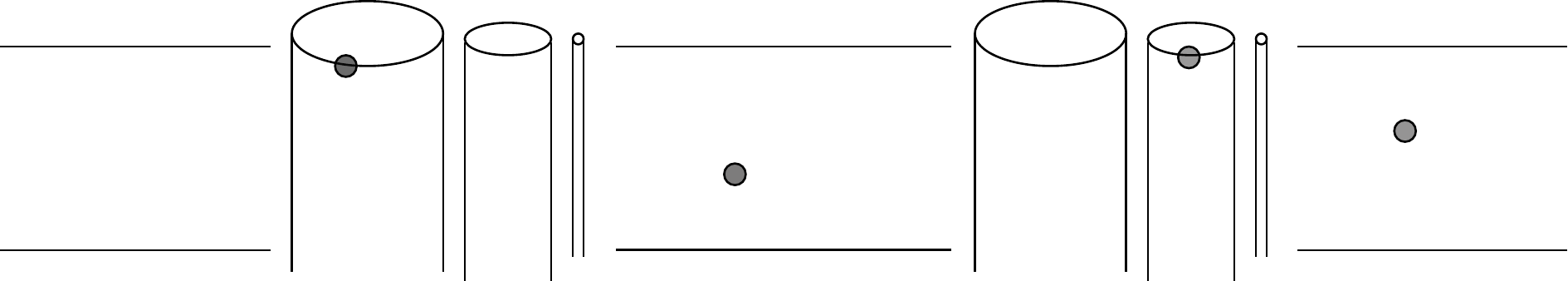}}%
    \put(0.21379321,0.102069){\color[rgb]{0,0,0}\makebox(0,0)[lt]{\lineheight{1.25}\smash{\begin{tabular}[t]{l}$T_1$\end{tabular}}}}%
    \put(0.45655187,0.08275865){\color[rgb]{0,0,0}\makebox(0,0)[lt]{\lineheight{1.25}\smash{\begin{tabular}[t]{l}$T_2$\end{tabular}}}}%
    \put(0.7517244,0.10758623){\color[rgb]{0,0,0}\makebox(0,0)[lt]{\lineheight{1.25}\smash{\begin{tabular}[t]{l}$T_3$\end{tabular}}}}%
    \put(0.90896569,0.07448278){\color[rgb]{0,0,0}\makebox(0,0)[lt]{\lineheight{1.25}\smash{\begin{tabular}[t]{l}$T_4$\end{tabular}}}}%
  \end{picture}%
\endgroup%

\caption{$2$-cluster tilting object in $\Cc^2 (H)$, $H$ of type $\tAa$.}
\label{figura EJ4}
\end{figure}

The endomorphism algebra $A = \End_{\Cc^2(H)}(\tilT )$ is a $2$-cluster tilted algebra arising from the $2$-cluster category of type ${\tilde{\mathbb{A}}}_{2,2}$. The algebra $A=kQ/I$ also is a $2$-cluster tilted algebra arising from a $2$-cluster category of type $\Aa_4$. See Figure \ref{figura ejA4}.

\begin{figure}[h!]
\centering
\def\svgwidth{4.6in}
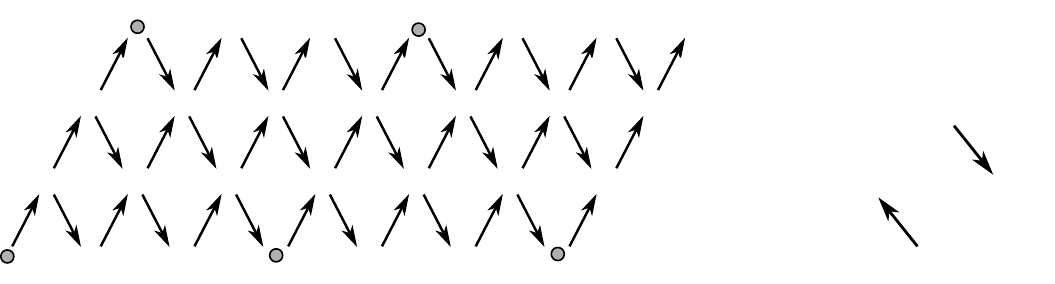
\caption{$2$-cluster tilting object in $\Cc^2 (H)$, $H$ of type $\Aa$. Example \ref{Ejemplo motivacion}.}
\label{figura ejA4}
\end{figure}

\end{ejemplo}

We observe that the algebra $A$ in the previous example is of finite representation type. That shows that the representation type is not preserved, as was the case for cluster tilted algebras. Notice that the $m$-cluster tilted object has direct summand in at least to different transjective components in the the cluster category. We will show that this is the general situation to get finite representation type.

\subsection{Iterated tilted algebras are $m$-cluster tilted algebras}

In this section we show that every iterated tilted algebra is an $m$-cluster tilted algebra for some $m$.

Consider $\Db(H)$ the bounded derived category of a hereditary algebra and let $T$ be \emph{tilting complex}, a complex $T$ such that $\Hom_{\Db(H)}(T,T[j]) = 0$ for all $j \not= 0$ and $\Hom_{\Db(H)}(T,X[j]) = 0 $ for all $j$ implies that $X= 0$. We recall that an algebra $A$ is \emph{iterated tilted} if $A = \End_{\Db(H)} (T)$, that is $A$ is derived equivalent to $H$. In this case both derived categories are triangle equivalent.

\begin{proposicion}
Let $A$ be an iterated tilted algebra  of type $Q$, then there exist $m \in \mathbb{N}$ such that $A$ is $r$-cluster tilted arising from $Q$, for all $r \geq m$.
\end{proposicion}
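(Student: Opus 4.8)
The plan is to realise $A$ as the endomorphism algebra, inside a suitable $\Cc^r(H)$, of the image of the very tilting complex that defines it. Write $A=\End_{\Db(H)}(T)$ with $H=kQ$ hereditary and $T$ a tilting complex. Since $H$ is hereditary every object of $\Db(H)$ is a direct sum of shifts of modules, so I may write $T=\bigoplus_{j=1}^{n} M_j[\ell_j]$ with the $M_j$ indecomposable $H$-modules; replacing $T$ by a global shift (which does not alter $\End_{\Db(H)}(T)$) I normalise $\min_j \ell_j=0$ and set $w=\max_j \ell_j$. The number of summands is $n$, the number of simple $H$-modules. I will show that $m:=w+2$ works: for every $r\ge m$ all the summands $M_j[\ell_j]$ lie in the fundamental domain $\Ss=\bigcup_{i=0}^{r-1}\ind\mmod H[i]\cup H[r]$ (in fact in $\bigcup_{i=0}^{r-1}\ind\mmod H[i]$, as $0\le\ell_j\le w\le r-1$), so $T$ has a well-defined image $\tilT$ in $\Cc^r(H)$.

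First I check that $\tilT$ is basic with exactly $n$ indecomposable summands. Two summands $M_j[\ell_j]$, $M_{j'}[\ell_{j'}]$ of the fundamental domain become isomorphic in $\Cc^r(H)$ only if they lie in one $F_r$-orbit, i.e. $M_{j'}[\ell_{j'}]\cong F_r^{s}M_j[\ell_j]=\tau^{-s}M_j[\ell_j+sr]$ for some $s$; comparing shifts and using $0\le\ell_\bullet\le w<r$ forces $s=0$, hence $j=j'$. Thus $\tilT=\bigoplus_{j=1}^n\pi(M_j[\ell_j])$ is basic with $n$ pairwise non-isomorphic summands. The core computation is then $\Hom_{\Cc^r(H)}(\tilT,\tilT[i])=\bigoplus_{l\in\mathbb{Z}}\Hom_{\Db(H)}(T,F_r^{l}T[i])$, with $F_r=\tau^{-1}[r]$. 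I record two facts about hereditary $H$: $\Hom_{\Db(H)}(X,Y[d])=\Ext^d_H(X,Y)$ vanishes for $d\notin\{0,1\}$, and $\tau^{s}$ sends an indecomposable module to an indecomposable object whose shift differs from $0$ by at most $|s|$. Combining these degree bounds with $r\ge w+2$ disposes of every term with $|l|\ge 2$, and of every $l=\pm1$ term whose internal degree falls outside $\{0,1\}$. For $i=0$ the only survivor is $l=0$, giving $\Hom_{\Db(H)}(T,T)=A$, so $\End_{\Cc^r(H)}(\tilT)=A$. For $1\le i\le r$ the term $l=0$ is $\Hom_{\Db(H)}(T,T[i])=0$ by the tilting hypothesis, and the $l\ge1$ terms have internal degree $\ge 3$.

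The delicate point, which I expect to be the main obstacle, is the $l=-1$ contribution to $\Hom_{\Cc^r(H)}(\tilT,\tilT[i])$ for $1\le i\le r$: here the internal degree can land in $\{0,1\}$, so it cannot be removed by degree alone, and must be controlled by Serre duality together with the tilting relations. When $M_k$ is non-projective, $\tau M_k$ is a module and the degree-$0$ (resp. degree-$1$) piece is $\Hom_H(M_j,\tau M_k)\cong D\Ext^1_H(M_k,M_j)$ (resp. $\Ext^1_H(M_j,\tau M_k)\cong D\Hom_H(M_k,M_j)$); the tilting condition $\Hom_{\Db(H)}(T,T[t])=0$ for $t\neq0$ forces the relevant $\Hom$/$\Ext$ to vanish unless $\ell_j-\ell_k$ equals $1$ (resp. $0$), and in exactly those cases the equation fixing the degree yields $i=r+1>r$, outside the admissible range. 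When $M_k=P_k$ is projective one has $\tau P_k=I_k[-1]$, so the contribution becomes $\Ext^{\bullet}_H(M_j,I_k)$: the degree-$1$ piece vanishes since $I_k$ is injective, while the degree-$0$ piece is $\Hom_H(M_j,I_k)\cong D\,e_kM_j$, which the tilting condition forces to zero whenever $\ell_k\neq\ell_j$, precisely the situation in which it could otherwise contribute.

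Assembling the three paragraphs, $\Ext^i_{\Cc^r(H)}(\tilT,\tilT)=\Hom_{\Cc^r(H)}(\tilT,\tilT[i])=0$ for $1\le i\le r$ and $\End_{\Cc^r(H)}(\tilT)=A$, so $\tilT$ is an $r$-cluster tilting object in $\Cc^r(H)$ and $A=\End_{\Cc^r(H)}(\tilT)$ is $r$-cluster tilted arising from $Q$, for every $r\ge m$. The only serious work is the case-analysis of the $l=\pm1$ terms above; everything else is degree bookkeeping together with the observation that increasing $r$ only widens the relevant degree gaps, which is what makes the conclusion uniform in $r\ge m$.
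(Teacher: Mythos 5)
Your proof is correct, and its skeleton is the same as the paper's: place the tilting complex $T$ (spread over finitely many consecutive copies of $\ind \mmod H$, with shifts normalised to $[0,w]$) inside the fundamental domain of $\Cc^r(H)$ for $r\ge w+2$, and kill all orbit summands $\Hom_{\Db(H)}(T,F_r^{l}T)$ with $l\neq 0$ by degree reasons, so that $\End_{\Cc^r(H)}(\tilT)=\End_{\Db(H)}(T)=A$; the paper's (four-line) proof consists of exactly this computation, with $m$ taken to be the spread plus $2$. Where you genuinely diverge is in how one knows $\tilT$ is an $r$-cluster tilting object: the paper's written proof never checks this and implicitly relies on \cite[Proposition 2.4]{BRT} (quoted only in the following section), by which an object of the fundamental domain is cluster tilting in the orbit category if and only if it is silting in $\Db(H)$ --- and a tilting complex is silting with the correct number $n$ of indecomposable summands. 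You instead verify the paper's definition directly: basicness via the $F_r$-orbit/shift comparison, and rigidity $\Ext^i_{\Cc^r(H)}(\tilT,\tilT)=0$ for $1\le i\le r$ by the same degree bookkeeping together with a Serre-duality/AR-formula case analysis of the $l=-1$ terms. I checked that analysis and it is sound: in the non-projective case the tilting relations force $\ell_j-\ell_k\in\{0,1\}$ in the only potentially nonzero degrees, and then the degree equation yields $i=r+1$, outside the admissible range; in the projective case $\Hom_H(M_j,I_k)\cong D\Hom_H(P_k,M_j)$ is annihilated by the tilting relations precisely when $\ell_k\neq\ell_j$, which is the only situation in which it could contribute. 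So your route buys a self-contained argument (modulo standard facts on hereditary algebras), at the cost of length, while the paper buys brevity by delegating the cluster-tilting verification to the silting correspondence of \cite{BRT} --- your $l=-1$ analysis fills in exactly the step the paper leaves to that citation. The discrepancy in constants ($m=w+2$ for you, spread plus $2$, i.e.\ $w+3$, in the paper) is immaterial, since the statement only asserts existence of $m$.
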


\begin{proof}
Since $A$ is iterated tilted, then $A = \End_{\Db(H)}(T)$ with $T$ a tilting complex. Suppose that $T$ is spread in  $l$ copies of $\mathrm{ind} \hspace{1pt} H$. Let be $m = l + 2$ and  $r$ greater or equal than $m$, consider the $r$-cluster category, then we have  $\Hom_{\Db(H)}(T,Z[r]) = 0$ if $Z$ belongs to $\mathrm{ind} \hspace{1pt} H$, since $r \geq l+2$ and $\Db(H)$ is the derived category of a hereditary algebra . It follows that $\End_{\Cc^r(H)}(\tilT) = A$, then $A$ is $r$-cluster tilted algebra, for all $r \geq m$.\end{proof}

\begin{remark}
Recently, Ladkani in \cite{L} proved that every finite dimensional algebra is $m$-Calabi-Yau tilted for some $m$ greater or equal than three.  It is important to recall that $m$-cluster categories are $(m+1)$-Calabi-Yau categories, so $m$-cluster tilted algebras are $(m+1)$-Calabi-Yau tilted algebras. This result is a particular case of Ladkani's result.
\end{remark}

\section{Euclidean $m$-cluster tilted algebras of finite representation type}

In this section we will characterize representation finite $m$-cluster tilted algebras arising from a hereditary algebra of Euclidean type. We will show that we can get this characterization in terms of the direct summands of the $m$-cluster tilting object. We will prove that this is the case if the $m$-cluster tilting object has direct summands in at least two different transjective components in the $m$-cluster category. The proof of the main theorem is inspired in the principal result of \cite{AHT} on iterated tilted algebras. Since iterated tilted algebras are $m$-cluster tilted algebras is not surprising that a similar result holds for both classes of algebras.

We start recalling the definition of right $\add M$-approximation. Given an $A$-module $M$, we say that a morphism $f_0: M_0 \ra X$, with $M_0$ in  $\add M$, is a right  $\add M$-\emph{approximation} if for every morphism $f_1: M_1 \ra X$ with $M_1$ in $\add M$, there exists a morphism $g: M_1\ra M_0$ such that $f_0g= f_1$. Equivalently, $f_0: M_0 \ra X$ with $M_0$ in  $\add M$ is a right $\add M$-approximation if $\Hom_A(-,f_0): \Hom_A(-,M_0) \ra \Hom_A(-,X)$ is surjective in $\add M$.

\smallskip

We prove a useful lemma.

\begin{lema}\label{isomorfismo}
Let $\tilde{T}$ be an $m$-cluster tilting object and $\tilX $ in $\Cc^m(H)$. Then there exist a triangle
\[\tilde{C} \ra \tilT_0 \stackrel{f}\ra \tilX \ra \tilde{C}[1],\]
such that $f$ is a right $\add \tilT$-approximation.
Moreover, we have that \[\Hom_{\Cc^m(H)}(\tilT,\tilde{C}[1])= 0 \hspace{6pt} \] and \[ \Hom_{\Cc^m(H)}(\tilT,\tilX[i]) \cong \Hom_{\Cc^m(H)}(\tilT,\tilde{C}[i+1]) \hspace{5pt} \mathrm{for \ all} \hspace{5pt} i=1, \ldots,m-2. \]
\end{lema}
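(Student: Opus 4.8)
The plan is to produce the triangle from a right $\add\tilT$-approximation and then read off both assertions from the long exact sequence obtained by applying $\Hom_{\Cc^m(H)}(\tilT,-)$, using only the Ext-vanishing built into the definition of an $m$-cluster tilting object. All $\Hom$ and $\Ext$ groups below are taken in $\Cc^m(H)$. First, since $\tilT$ is $m$-cluster tilting, the subcategory $\add\tilT$ is functorially finite in $\Cc^m(H)$, as recalled in the preliminaries; in particular there exists a right $\add\tilT$-approximation $f\colon\tilT_0\ra\tilX$ with $\tilT_0\in\add\tilT$. Completing $f$ to a triangle in the triangulated category $\Cc^m(H)$ yields
\[\tilde{C}\ra\tilT_0\xra{f}\tilX\ra\tilde{C}[1],\]
which is the required triangle, $\tilde{C}$ being a shift of the cone of $f$. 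Throughout I will use that $\tilT_0$ is a summand of a finite sum of copies of $\tilT$, so that $\Ext^j(\tilT,\tilT_0)=0$ for $1\le j\le m$, since $\Ext^j(\tilT,\tilT)=0$ in that range by definition.

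Next I would apply the cohomological functor $\Hom(\tilT,-)$ to the triangle. For the vanishing statement the relevant segment of the resulting long exact sequence reads
\[\Hom(\tilT,\tilT_0)\xra{f_*}\Hom(\tilT,\tilX)\ra\Hom(\tilT,\tilde{C}[1])\ra\Hom(\tilT,\tilT_0[1]).\]
Since $f$ is a right $\add\tilT$-approximation, the map $f_*=\Hom(\tilT,f)$ is surjective, and the last term is $\Ext^1(\tilT,\tilT_0)=0$. Surjectivity of $f_*$ kills the image of the connecting map into $\Hom(\tilT,\tilde{C}[1])$, while the vanishing on the right makes the outgoing map zero; exactness then forces $\Hom(\tilT,\tilde{C}[1])=0$.

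For the isomorphisms I would shift the triangle by $[i]$ and again apply $\Hom(\tilT,-)$, obtaining the four-term exact sequence
\[\Hom(\tilT,\tilT_0[i])\ra\Hom(\tilT,\tilX[i])\ra\Hom(\tilT,\tilde{C}[i+1])\ra\Hom(\tilT,\tilT_0[i+1]).\]
The two outer terms equal $\Ext^i(\tilT,\tilT_0)$ and $\Ext^{i+1}(\tilT,\tilT_0)$. For $1\le i\le m-2$ both indices $i$ and $i+1$ lie in $\{1,\dots,m\}$, so both terms vanish and the middle map is an isomorphism, giving $\Hom(\tilT,\tilX[i])\cong\Hom(\tilT,\tilde{C}[i+1])$ for $i=1,\dots,m-2$, as claimed.

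The argument is essentially formal once the approximation triangle is in place, so the only genuine inputs are the functorial finiteness of $\add\tilT$ (which supplies $f$) and the long exact sequence attached to a triangle. The one point requiring care is the degree bookkeeping: the stated range $i=1,\dots,m-2$ is exactly what keeps both $\Ext^i$ and $\Ext^{i+1}$ inside the vanishing window $1\le j\le m$ (indeed the isomorphism persists up to $i=m-1$). I do not anticipate any serious obstacle beyond tracking these indices correctly.
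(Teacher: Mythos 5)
Your proposal is correct and follows essentially the same route as the paper: functorial finiteness of $\add\tilT$ supplies the right approximation $f$, one completes it to a triangle, applies $\Hom_{\Cc^m(H)}(\tilT,-)$, and reads off both claims from the long exact sequence using $\Ext^{j}(\tilT,\tilT_0)=0$ for $1\le j\le m$. Your write-up is in fact slightly more explicit than the paper's (spelling out that surjectivity of $f_*$ together with $\Ext^1(\tilT,\tilT_0)=0$ forces $\Hom(\tilT,\tilde{C}[1])=0$), and your index bookkeeping matches the stated range.
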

\begin{proof}

Since $\add \tilT$ is an $m$-cluster tilting subcategory, then is functorially finite. It follows that for all $\tilX$ in $\Cc^m(H)$ there exists $f: \tilT_0 \ra \tilX$ a right $\add \tilT$-approximation of $\tilX$. Then there exists a triangle
\[\tilde{C} \ra \tilT_0 \stackrel{f}\ra \tilX \ra \tilde{C}[1].\]
We apply $\Hom_{\Cc^m(H)}(\tilT, -)$ and we get a long exact sequence
\[ \cdots \ra \Hom_{\Cc^m(H)}(\tilT, \tilde{C}) \ra \Hom_{\Cc^m(H)}(\tilT,\tilT_0) \ra \Hom_{\Cc^m(H)}(\tilT, \tilX) \ra \Hom_{\Cc^m(H)}(\tilT, \tilde{C}[1]) \]
\[ \ra \Hom_{\Cc^m(H)}(\tilT,\tilde{T_0}[1]) \ra \Hom_{\Cc^m(H)}(\tilT, \tilde{X}[1]) \ra \Hom_{\Cc^m(H)}(\tilT,\tilde{C}[2]) \ra \cdots\]

Since $\Hom_{\Cc^m(H)}(\tilT, \tilde{T_0}[i]) = 0$ for all $i = 1, \ldots, m-1$, we obtain the isomorphism  \[ \Hom_{\Cc^m(H)}(\tilT, \tilde{X}[i]) \cong \Hom_{\Cc^m(H)}(\tilT, \tilde{C}[i+1])\] for $ 1\leq i \leq m-2$. Since $f$ is an $\add \tilT$-approximation, we get that $\Hom_{\Cc^m(H)}(\tilT, \tilde{C}[1]) = 0$.
\end{proof}

There exists a nice connection between $m$-cluster tilting objects in the $m$-cluster category and silting complexes in the fundamental domain in the derived category of the hereditary algebra.

We recall the following definitions.

A basic object $T$ in $\Db(H)$ is said to be \emph{partial silting} if $\Hom(T, T[i]) = 0$ for $i > 0$ , and \emph{silting} if, in addition, $T$ is maximal with this property. In \cite[Theorem 2.3]{AST}, the authors prove that the number of indecomposable summands of $T$ is bounded by the number of simple modules of the algebra $H$ and it is a \emph{silting} if the number of indecomposable summands is equal to the number of simple modules. In \cite[Proposition 2.4]{BRT}, the authors show that if $T$ is an object in the fundamental domain $\Ss$ of $\Db(H)$, then $\tilT$ is an $m$-cluster tilting object in $\Cc^m(H)$ if and only if $T$ is a silting complex in $\Db(H)$.

In the next lemma, we show that there is always a transjective direct summand for $m$-cluster tilting objects in the $m$-cluster category of a hereditary algebra of Euclidean type.

\begin{lema} Let $\Cc^m(H)$ be an $m$-cluster category of Euclidean type. Let $\tilT$ be an $m$-cluster tilting object. Then, $\tilT$ has at least a transjective summand.
\end{lema}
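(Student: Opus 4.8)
The plan is to translate the statement into a rank count in the Grothendieck group $K_0(H)\cong\mathbb Z^n$, where $n$ is the number of non-isomorphic simple $H$-modules. By \cite[Proposition 2.4]{BRT}, recalled above, the representative $T$ of $\tilT$ in the fundamental domain $\Ss$ is a \emph{silting} complex in $\Db(H)$; by the discussion following \cite[Theorem 2.3]{AST} such a complex has exactly $n$ indecomposable summands $T_1,\dots,T_n$. First I would record that a silting complex generates $\Db(H)$ as a thick subcategory, so the classes $[T_1],\dots,[T_n]$ generate $K_0(\Db(H))=K_0(H)\cong\mathbb Z^n$; being $n$ generators of a free group of rank $n$, they form a $\mathbb Z$-basis and in particular are linearly independent over $\mathbb Q$.

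Next I would argue by contradiction: suppose $\tilT$ has \emph{no} transjective summand. Since every indecomposable object of $\Cc^m(H)$ lies either in a transjective component $\Cc_0,\dots,\Cc_{m-1}$ or in a regular component $\Rr_0,\dots,\Rr_{m-1}$, this means that every summand $\tilT_i$ lies in some $\Rr_j$. By the description of $\Gamma(\Cc^m(H))$ and of $\Ss=\bigcup_{i=0}^{m-1}\ind\mmod H[i]\cup H[m]$, a summand in a regular component has a representative of the form $X_i[d_i]$ with $X_i$ a \emph{regular} $H$-module and $0\le d_i\le m-1$ (the objects coming from $H[m]$ are shifts of projectives, hence transjective, so they are excluded here). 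The key input from the theory of tame hereditary algebras \cite{R} is that the defect $\partial\colon K_0(H)\to\mathbb Z$ is a nonzero linear form vanishing precisely on the regular modules; in particular $\partial([X_i])=0$, and since $[X_i[d_i]]=(-1)^{d_i}[X_i]$ in $K_0$ we get
\[
\partial\bigl([T_i]\bigr)=\partial\bigl([X_i[d_i]]\bigr)=(-1)^{d_i}\partial\bigl([X_i]\bigr)=0
\]
for all $i$. Thus all $n$ classes $[T_i]$ lie in $\ker\partial$, which has rank $n-1$. This contradicts their linear independence over $\mathbb Q$, so $\tilT$ must have a summand outside the regular components, i.e.\ a transjective summand.

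The main obstacle, and the point I would take care to justify cleanly, is the passage between the three levels: that a regular component of $\Gamma(\Cc^m(H))$ corresponds exactly to (shifts of) regular $H$-modules in $\Ss$, and that the classes of the summands of a silting complex are a basis of $K_0$. The first is read off from the structure of $\Gamma(\Cc^m(H))$ together with the fundamental-domain description; the second rests on silting objects being generators, so that their $n$ classes span the rank-$n$ group $K_0(H)$. Everything about the defect—that it is a nonzero functional and that regular modules are exactly the defect-zero indecomposables—is standard for Euclidean type and is available from \cite{R}.

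As an alternative I would keep in mind a purely combinatorial argument via the separating tubular family: distinct tubes are $\Hom$- and $\Ext$-orthogonal, and a rigid object supported in a single stable tube of rank $r$ has at most $r-1$ indecomposable summands, while $\sum_\lambda (r_\lambda-1)=n-2$. This bounds the number of regular summands by $n-2<n$ and again forces a transjective summand (indeed at least two), but it requires more careful bookkeeping of the $m$ shifted copies $\Rr_0,\dots,\Rr_{m-1}$ and of the $\Ext^i$-conditions for $1\le i\le m$, so I would present the defect argument above as the primary proof.
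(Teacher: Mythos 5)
Your proposal follows essentially the same route as the paper's proof: pass to a representative $T$ in the fundamental domain, invoke \cite[Proposition 2.4]{BRT} to see that $T$ is silting, use the generation/basis property of its classes in the Grothendieck group (the paper cites \cite{AST} and \cite[III.1.2]{H1}), and derive a contradiction from all summands being regular. Your defect computation $\partial([X_i[d_i]])=(-1)^{d_i}\partial([X_i])=0$ merely makes explicit the linear dependence of the classes of regular objects that the paper asserts without elaboration, so this is a correct and slightly more detailed version of the same argument.
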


\begin{proof}
Without loss of generality we can choose a representative $T$ in the fundamental domain $\Ss$ of $\Db(H)$. By \cite[Proposition 2.4]{BRT}, $T$ is a silting complex in the fundamental domain $\Ss$ of $\Db(H)$. By \cite{AST} Lemma 3.1 (a) and Corollary 4.4, we have that $T$ must be a generator of $\Db(H)$, so the smallest triangulated category generated by $T$ must be $\Db(H)$.  If all the direct summands of $T$ belong to regular components, then $\underline{\dim} \hspace{1pt} T_i$ are linearly dependent, a contradiction to the fact that they must form a basis of the Grothendieck group of $\Db(H)$, by \cite[III.1.2]{H1}.
\end{proof}

The following remark will be very useful for the proof of our mains results.

\begin{remark} \label{minimalrepinf} Let $H$ be a hereditary algebra of Euclidean type and let $e$ be an idempotent element in $H$. Then the quotient algebra $H/\langle e \rangle$ is of finite representation type. In particular, if $P$ is a non-zero projective $H$-module then $ \Hom_H (P,M) \neq 0$ for almost all $M$ in $\mod H$. Dually, if $I$ is a non-zero injective module, then $\Hom_H(N,I)\neq 0$ for almost all $N$ in $\mod H$.
\end{remark}

We start proving our first result.

\begin{proposicion} \label{repfinite}
Let $A = \End_{\Cc^m(H)}(\tilT)$ be an $m$-cluster tilted algebra arising from a hereditary algebra $H$ of Euclidean type such that the $m$-cluster tilting object $\tilT$ has at least two direct summands in different transjective components of $\Gamma(\Cc^m (H))$. Then $A$ is of finite representation type.
\end{proposicion}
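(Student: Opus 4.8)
The plan is to count indecomposables through the Keller--Reiten equivalence and then to confine them using the structure of tame hereditary (derived) categories. By Lemma \ref{equivalencia}(b) the functor $F=\Hom_{\Cc^m(H)}(\tilT,-)$ induces an equivalence $\Uu/(\add T[1])\simeq\mmod A$, so $A$ is representation finite if and only if $\Uu=\add T\ast\add T[1]$ contains only finitely many indecomposable objects up to isomorphism (the ones in $\add T[1]$ being finite in number already). Moreover, every indecomposable $\tilX\in\Uu$ comes with a triangle $\tilT_1\ra\tilT_0\ra\tilX\ra\tilT_1[1]$ with $\tilT_0,\tilT_1\in\add T$; applying $\Hom_{\Cc^m(H)}(\tilT,-)$ and using the rigidity $\Ext^l_{\Cc^m(H)}(\tilT,\tilT)=0$ for $1\le l\le m$ yields the clean constraint $\Ext^l_{\Cc^m(H)}(\tilT,\tilX)=0$ for $1\le l\le m-1$. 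Thus the whole task reduces to showing that only finitely many indecomposables $\tilX$ (transjective, in one of $\Cc_0,\dots,\Cc_{m-1}$, or regular, in one of the tube families $\Rr_0,\dots,\Rr_{m-1}$) can satisfy this vanishing.

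First I would fix the two distinguished summands supplied by the hypothesis: write $\tilT_a$ for a transjective summand of $\tilT$ in $\Cc_i$ and $\tilT_b$ for one in $\Cc_j$ with $i\neq j$, viewing them in the fundamental domain as shifts of a (pre)pro\-jec\-tive and a (pre)injective $H$-module. The heart of the argument is a two-sided $\Ext$-estimate. Using Remark \ref{minimalrepinf}, a transjective summand behaves like a projective on one side: for almost all $\tilX$ lying ``forward'' of $\Cc_i$ in $\Gamma(\Cc^m(H))$, one of the summands $\Hom_{\Db(H)}(T_a,\tau^{-k}X[\,\cdot\,])$ of $\Hom_{\Cc^m(H)}(\tilT_a,\tilX[l])$ is of the form $\Hom_H(P,\tau^{-k}Z)\neq0$, so that $\Ext^l_{\Cc^m(H)}(\tilT,\tilX)\neq0$ for some $1\le l\le m-1$; dually, since $\Cc^m(H)$ is $(m+1)$-Calabi--Yau, Serre duality turns the injective-type non-vanishing coming from $\tilT_b$ into a non-vanishing $\Ext^l_{\Cc^m(H)}(\tilT,\tilX)\neq0$ for almost all $\tilX$ lying ``backward'' of $\Cc_j$. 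Because $i\neq j$, these two one-sided bounds overlap in only a finite window of the AR quiver, and any $\tilX$ outside this window contradicts $\Ext^{1\le l\le m-1}(\tilT,\tilX)=0$, hence cannot lie in $\Uu$. Since Remark \ref{minimalrepinf} applies to regular modules as well, the same estimate excludes all but finitely many tubes and bounds the quasi-length inside them, so only finitely many regular $\tilX$ survive.

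The main obstacle is precisely this confinement step: one must make rigorous the ``forward/backward'' ordering of the transjective components and tubes of $\Gamma(\Cc^m(H))$, and carry out the bookkeeping of the shifts in the fundamental domain so as to verify that for $\tilX$ far from the window an exponent $l\in\{1,\dots,m-1\}$ with $\Ext^l_{\Cc^m(H)}(\tilT,\tilX)\neq0$ really occurs (the residue of the shift modulo $m$, and whether the relevant summand is preprojective or preinjective, must be tracked carefully). This is exactly where the hypothesis $i\neq j$ is indispensable, and where the argument parallels the two-transjective-component criterion of \cite{AHT}. The regular case is the most delicate, as it additionally requires that $\tilT$ meets only finitely many tubes; both points rely on the detailed description of $\Hom$ and $\Ext$ groups in the tame hereditary setting from \cite{R}.
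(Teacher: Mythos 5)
Your reduction coincides with the paper's: via Lemma \ref{equivalencia} it suffices to show that almost all indecomposables of $\Cc^m(H)$ fail to lie in $\Uu$, and your observation that $\tilX\in\Uu$ forces $\Hom_{\Cc^m(H)}(\tilT,\tilX[l])=0$ for $1\le l\le m-1$ is the same exclusion criterion the paper obtains through the approximation triangle $\tilde{C}\ra\tilT_0\ra\tilX\ra\tilde{C}[1]$ of Lemma \ref{isomorfismo} (nonvanishing of some such group shows the cone $\tilde{C}$ is not in $\add\tilT$, hence $\tilX\notin\Uu$). Your two-sided detection scheme --- one transjective summand ruling out objects ``ahead'', the other ruling out objects ``behind'' --- is also exactly the paper's mechanism.

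The genuine gap is that what you defer as ``the main obstacle'' is the entire body of the paper's proof; your sketch stops precisely where the proof starts. Note first that in $\Cc^m(H)$ the ``forward/backward ordering'' you invoke does not literally exist: the category is cyclic under $F_m=\tau^{-1}[m]$, so the window argument only makes sense after normalizing representatives in the fundamental domain, which is how the paper proceeds. It chooses $T\in\Ss$ with one transjective summand $T_0\in\Cc_0$ and another, $T_i$, with $T_i,\tau T_i\in\mmod H[k]$ for some $0<k\le m-2$ (changing $H$ if necessary); then every indecomposable is $X=Z[j]$ with $Z\in\ind\mmod H$, and the dichotomy is by the shift degree $j$, not by a geometric window. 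For $0\le j\le k$ one realizes $\tau T_i$ as an injective module over a slice algebra $H'$ through $T_i$, applies the dual half of Remark \ref{minimalrepinf} to get $\Hom_{\Db(H)}(Z[k],\tau T_i)\neq 0$ for almost all $Z$, and converts this by Serre duality $\Hom_{\Db(H)}(X[r],\tau T_i)=D\Hom_{\Db(H)}(T_i,X[r+1])$ with $r=k-j$ into a nonvanishing $\Hom(\tilT,\tilX[l])$ with $l$ in the admissible range; for $k+1\le j\le m-1$ one runs the symmetric computation against $\tau\tilde{T}_0$ with $r=m-1-j$. The congruence bookkeeping you flag as needing care is exactly what the inequality $0<k\le m-2$ --- i.e.\ the hypothesis that the two summands lie in \emph{different} transjective components --- guarantees, and without carrying it out nothing has been proved. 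Finally, your concern about the regular case is misplaced: no separate tube analysis (excluding tubes, bounding quasi-lengths, or fine input from \cite{R}) is needed, because Remark \ref{minimalrepinf} is a statement about almost all indecomposables of $\mmod H$ at once --- the quotient $H/\langle e\rangle$ is representation finite --- so regular $Z$ are covered by the same two cases as transjective ones.
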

\begin{proof}
We claim that for almost all $X \in \Ss$, we have that $\Hom_{\Cc^m(H)}(\tilT,\tilde{X}[i]) \not= 0$ for some $i = 1, \cdots m-2$.

In fact, let $T$ be a representative of $\tilT$ in the fundamental domain $\Ss$. Without lost of generality, we can assume that the first transjective direct summand $T_0$ of $T$ is in the transjective component $\mathcal{C}_0$. Since $T$ has at least two transjective direct summands in different components. Suppose that other transjective direct summand $T_i$ lie in $\mathcal{C}_{k+1}$. We can assume without loss of generality that $T_i$ and $\tau T_i$ lie in $\mmod H [k]$, for $0< k \leq m-2$, changing the algebra $H$ if necessary.

We study the case of all the indecomposable objects in the fundamental domain $\Ss$.

We first consider $X = Z[j]$ in $\Ss$, with $Z$ in $\mmod H$ and $j = 0, \dots, k$. We can suppose that $\tau T_i$ is an injective $H'$-module over a hereditary algebra $H'$, choosing $H'$ the hereditary algebra obtained from a complete slice passing through $T_i$. Let be $r=k-j$, then by the Serre duality,  $$\Hom_{\Db(H)}(X[r], \tau T_i) = D\Hom_{\Db(H)}( T_i, X[r+1]).$$
We have that $\Hom_{\Db(H)}(X[r], \tau T_i) = \Hom_{\Db(H)}(Z[k], \tau T_i) \not = 0$ for almost all $Z$, by Remark \ref{minimalrepinf}. We obtain that $\Hom_{\Db(H)}( T_i, X[r]) \not= 0$ for some $r = 0, \cdots m-3$.

Consider, as in \ref{isomorfismo}, the triangle \[\tilde{C} \ra \tilT_0 \stackrel{f}\ra \tilX \ra \tilde{C}[1],\]

We observe, since $\add \tilT$ is a cluster tilting subcategory, that $\widetilde{C}$ belongs to $\add \tilT$ if and only if $\Hom_{\Cc^m(H)}(\tilT,\tilde{C}[l]) \cong 0$ for all $l = 1, \cdots, m-1$.

By Lemma \ref{isomorfismo}, we have that $\Hom_{\Cc^m(H)}(\tilT,\tilde{C}[l+1])= \Hom_{\Cc^m(H)}(\tilT,\tilde{X}[l])\not  = 0$ for some $l$ in $\{ 1, \ldots, m-2\}$.

\smallskip

Secondly, if $X= Z[j]$ with $j = k+1, \cdots, m-1$. Le be $r=m-1-j$, that is $r= 0, \cdots, m-k-2$. Then, we have
\[\Hom_{\Cc^m(H)}(\tilde{X}[r], \tau \tilde{T}_0) = \Hom_{\Cc^m(H)}(\tilde{Z}[m-1], \tau \tilde{T}_0) \not = 
0\] for almost all $Z$, since $\Hom_{\Cc^m(H)}(\tilde{Z}[m-1], \tau \tilde{T}_0) \cong \Hom_{\Db(H)}(\tilde{Z}[m-1], \tau \tilde{T}_0)$ by Remark \ref{minimalrepinf}.

It follows that $\Hom_{\Cc^m(H)}(\tilde{X}[r], \tau \tilde{T}_0) = D\Hom_{\Cc^m(H)}(\tilde{T}_0, \tilde{X}[r+1]) \not = 0$ for some $r= 0,\cdots, m-3$, then $\Hom_{\Cc^m(H)}(\tilde{T}_0, \tilde{X}[r+1]) = \Hom_{\Cc^m(H)}(\tilde{T}_0, \tilde{C}[r+2]) \neq 0$ for some $r+2 = 2, \cdots,  m-1$. We get that $\tilde{C}$ does not belong to $\add \tilde{T}$, then $X$ is not in $\Uu$ (see Remark \ref{remark U}).

\smallskip

We have proven that for almost all $X$ in  $\Ss$ we obtain that $\tilde{X}$ is not in $\Uu$. Using, Lemma \ref{equivalencia}, we get that $A$ is of finite representation type.
\end{proof}

In the next step, we show that the converse is also true.

\begin{proposicion} \label{repinfinite} Let $\tilT$ be an $m$-cluster tilting object such that all its transjective direct summands lie in a single transjective component $\Cc_0$. Let $\Tt_\lambda$ be a homogeneous tube in $\Rr_0$. Then, almost all $\tilX \in \Tt_\lambda$ lie in $\Uu$, hence $\End_{\Cc^m(H)} (\tilT) = A$ is a representation infinite algebra.
\end{proposicion}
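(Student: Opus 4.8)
The plan is to characterise membership in $\Uu$ by an Ext–vanishing condition and then to verify that condition for every object of the homogeneous tube. Concretely, for $\tilX$ take the approximation triangle $\widetilde C\ra \tilT_0\xra{f}\tilX\ra \widetilde C[1]$ of Lemma \ref{isomorfismo}. As in the proof of Proposition \ref{repfinite}, $\tilX$ lies in $\Uu$ if and only if $\widetilde C\in\add\tilT$, and since $\add\tilT$ is $m$-cluster tilting this holds precisely when $\Hom_{\Cc^m(H)}(\tilT,\widetilde C[l])=0$ for $l=1,\dots,m$. The case $l=1$ is automatic since $f$ is an $\add\tilT$-approximation, and the isomorphisms of Lemma \ref{isomorfismo}, extended to the top degree using $\Ext^{m}_{\Cc^m(H)}(\tilT,\tilT)=0$, translate the remaining conditions into $\Hom_{\Cc^m(H)}(\tilT,\tilX[i])=0$ for $i=1,\dots,m-1$. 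Thus it suffices to prove this vanishing for every $\tilX\in\Tt_\lambda$.

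I would then fix a representative $T=\bigoplus_s T_s$ of $\tilT$ in $\Ss$ and expand via $\Hom_{\Cc^m(H)}(\tilT,\tilX[i])=\bigoplus_{j}\Hom_{\Db(H)}(T,\tau^{-j}X[mj+i])$. The first observation, crucial here, is that \emph{no} indecomposable summand of $\tilT$ can sit in a homogeneous tube: if $R$ is indecomposable in such a tube then $\tau R=R$, whence $\Ext^1_H(R,R)\cong D\Hom_H(R,\tau R)\neq 0$, contradicting $\Ext^1_{\Cc^m(H)}(\tilT,\tilT)=0$. Therefore every regular summand $R[k]$ of $T$ lies in a tube different from $\Tt_\lambda$; since $\tau^{-j}X$ stays in $\Tt_\lambda$ and distinct tubes are orthogonal, $\Hom_H(R,\tau^{-j}X)=\Ext^1_H(R,\tau^{-j}X)=0$, so these summands contribute nothing for any $i$.

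It remains to treat the transjective summands, which by hypothesis all lie in $\Cc_0$; pinning down their position inside $\Ss$ is the step I expect to be the main obstacle. Using $\tau^{-1}I=P[1]$ in $\Db(H)$ together with the action of $F_m$, one identifies the objects of $\Cc_0$ in the fundamental domain as exactly the preprojective modules in degree $0$, the preinjective modules in degree $m-1$, and the indecomposable projectives in degree $m$ (the wrap–around visible in Figure \ref{categoriamcluster}). Substituting each of the three types into the formula and using that $\Db(H)$ is hereditary (so $\Ext^{\ge 2}_H=0$ and $\Ext^{<0}_H=0$), a short arithmetic check in $i$ and $j$ shows that for $1\le i\le m-1$ the only terms that can survive are $\Ext^1_H(P,\tau^{-1}X)$ with $P$ projective, $\Ext^1_H(U,X)\cong D\Hom_H(X,\tau U)$ with $U$ preprojective, and $\Hom_H(N,X)$ with $N$ preinjective. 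All of these vanish, because $\Ext^1$ out of a projective is zero, $\Hom_H(\text{regular},\text{preprojective})=0$, and $\Hom_H(\text{preinjective},\text{regular})=0$. Hence $\Hom_{\Cc^m(H)}(\tilT,\tilX[i])=0$ for all $i=1,\dots,m-1$.

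Combining the last two paragraphs, every $\tilX\in\Tt_\lambda$ belongs to $\Uu$. Since $\Tt_\lambda$ is homogeneous it contains no summand of $T$ nor the $[1]$-shift of one, so no such $\tilX$ lies in $\add\tilT[1]$; by the equivalence $\Uu/(\add\tilT[1])\xra{\sim}\mmod A$ of Lemma \ref{equivalencia}, the infinitely many pairwise non-isomorphic indecomposables of $\Tt_\lambda$ are sent to infinitely many pairwise non-isomorphic nonzero $A$-modules, so $A$ is representation infinite. The genuinely delicate points are the identification of $\Cc_0$ within $\Ss$ and the bookkeeping of which $\Ext^{\bullet}_H$-terms survive; the rest is orthogonality of distinct tubes and the hereditary vanishing of $\Ext^{\ge 2}$.
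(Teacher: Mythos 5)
Your proposal is correct and follows essentially the same route as the paper: the approximation triangle of Lemma \ref{isomorfismo} reduces membership of $\tilX$ in $\Uu$ to the vanishing of $\Hom_{\Cc^m(H)}(\tilT,\tilX[i])$ for $i=1,\dots,m-1$, which you then verify by splitting $T$ into regular and transjective summands and doing degree bookkeeping in $\Db(H)$, concluding via the Keller--Reiten equivalence of Lemma \ref{equivalencia}, exactly as the paper does. Your write-up is in fact somewhat more explicit at points the paper glosses over (that rigidity excludes summands in homogeneous tubes, the identification of $\Cc_0$ inside $\Ss$ needed for the $j=0$ terms of the transjective summands, and getting nonzero non-isomorphic modules because no $\tilX\in\Tt_\lambda$ lies in $\add\tilT[1]$), but these are refinements of the same argument, not a different one.
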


\begin{proof}
Let $\tilT_k$ be the transjective direct summand of $\tilT$, in such way that $ \tilT = \tilT_k \oplus \tilT_l$ where $\tilT_l$ is a regular direct summand.  Then we have, by Remark \ref{minimalrepinf}, that $\Hom_{\Db(H)}(T_k,X) \not= 0$ for almost all $X$ in the homogeneous tube $\Tt_\lambda$. Let $f_0: \tilT_0 \ra \tilX$ the $\add \tilT$-approximation  of $\tilX$.

By Lemma \ref{isomorfismo}, we have that $\Hom_{\Cc^m(H)}(\tilT,\tilde{C}[1])=0$ and \[\Hom_{\Cc^m(H)}(\tilT,\tilX[i]) \cong \Hom_{\Cc^m(H)}(\tilT,\tilde{C}[i+1]) \hspace{5pt} \mathrm{ for } \hspace{5pt} i=1 ,\ldots, m-2.\]

We claim that $\Hom_{\Cc^m(H)}(\tilT,\tilX[i])= 0$ for all $i$ from $1$ to $m-1$. In fact, let $T$ be a representative of $\tilT$ in $\Ss$.

By \cite[Lemma 2.1]{FPT}, we have that\[\Hom_{\Cc^m(H)}(\tilT,\tilX) = \Hom_{\Db(H)}(T,X) \oplus \Hom_{\Db(H)}(T,F^{-1}X) \hspace{5pt} \mathrm{for} \hspace{5pt} T \hspace{5pt}\mathrm{and} \hspace{5pt} X \hspace{5pt} \mathrm{in} \hspace{5pt} \Ss .\]

It holds that $\Hom_{\Db(H)}(T_l,X[i]) = D\Hom_{\Db(H)}(X[i-1],\tau T_l) = 0$ for all $i > 0$, since $X$ is homogeneous, and $T_l$ belongs to a wide tube and there are not morphism between them.

On the other hand, $\Hom_{\Db(H)}(T_k,F^{-1}X[i]) = \Hom_{\Db(H)}(T ,\tau X[i-m]) = 0$ since $i-m <0$ and there are not morphisms going back in the derived category of a hereditary algebra. Then $\Hom_{\Db(H)}(T,X[i])$  is zero
for all $i > 0$.

The same argument shows that there are not non zero morphisms from $T_l$ to $X[i-m]$. Thus, we have $\Hom_{\Cc^m(H)}(\tilT,\tilX[i])= 0$ for all $1 \leq i \leq m-1$.

By Lemma \ref{isomorfismo}, $\Hom_{\Cc^m(H)}(\tilT,\tilde{C}[i+1]) = 0$ for all $i$ from $1$ to $m-2$. Then we have $\Hom_{\Cc^m(H)}(\tilT,\tilde{C}[i]) = 0$, for all $i= 1, \cdots, m-1$.  Then $\tilde{C} \in \add \tilde{T}$ and
$\tilX$ belongs to $\Uu$ for almost all $X$ in $\Tt_\lambda$. By Lemma \ref{equivalencia}, the module category $\mmod A$  has infinitely many indecomposables and $A$ is representation infinite.
\end{proof}

Combining the results in Proposition \ref{repfinite} and Proposition \ref{repinfinite}, we can state the main theorem of the section.

\begin{teorema}\label{theorem-main}
Let $\Cc^m(H)$ be the $m$-cluster category of Euclidean type and let $A= \End_{\Cc^m(H)}(\tilT)$ be an $m$-cluster tilted algebra. Then, $A$ is of finite representation type if and only if the $m$-cluster tilting object $\tilT$ has summands in at least two different transjective components of $\Gamma(\Cc^m (H))$.
\end{teorema}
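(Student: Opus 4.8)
The plan is to prove the equivalence by assembling the two propositions that immediately precede the theorem, so the main work is really a matter of packaging rather than new estimates. The statement is an ``if and only if'', so I would treat the two directions separately, and in each case reduce to the position of the transjective summands of $\tilT$.

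For the forward-sufficiency direction, namely that having summands in at least two different transjective components forces finite representation type, I would simply invoke Proposition \ref{repfinite} directly: its hypothesis is exactly that $\tilT$ has at least two direct summands in different transjective components of $\Gamma(\Cc^m(H))$, and its conclusion is that $A$ is of finite representation type. So this half is immediate. For the converse, I want to prove the contrapositive: if $\tilT$ does \emph{not} have summands in two different transjective components, then $A$ is representation infinite. The key preliminary observation is the lemma preceding Proposition \ref{repfinite}, which guarantees that $\tilT$ always has at least one transjective summand. Hence the negation of the two-component condition means precisely that \emph{all} transjective summands of $\tilT$ lie in a single transjective component, which we may label $\Cc_0$. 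This is exactly the hypothesis of Proposition \ref{repinfinite}, whose conclusion is that $A = \End_{\Cc^m(H)}(\tilT)$ is representation infinite.

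Concretely, I would write: ``Suppose first that $\tilT$ has summands in at least two different transjective components. Then by Proposition \ref{repfinite}, $A$ is of finite representation type. Conversely, suppose that $\tilT$ does not have summands in two different transjective components. Since every $m$-cluster tilting object has at least one transjective summand, all transjective summands of $\tilT$ must lie in a single transjective component, which we call $\Cc_0$. Choosing any homogeneous tube $\Tt_\lambda$ in the associated family of regular components $\Rr_0$, Proposition \ref{repinfinite} applies and shows that almost all $\tilX \in \Tt_\lambda$ lie in $\Uu$; hence by Lemma \ref{equivalencia}, $\mmod A$ has infinitely many indecomposables and $A$ is representation infinite.'' Together these establish the biconditional.

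I do not expect a genuine obstacle here, since both propositions have already been proved in the excerpt; the only point requiring care is the logical bridge in the converse. One must observe that the existence-of-a-transjective-summand lemma is what upgrades ``not in two components'' into the precise statement ``all transjective summands in one component $\Cc_0$'' that Proposition \ref{repinfinite} requires --- without it, ``not two components'' could a priori allow zero transjective summands, a case not covered by that proposition. Spelling out this reduction, and noting that Proposition \ref{repinfinite} needs a regular (in fact homogeneous-tube) summand setting which is automatically available once the transjective part is confined to $\Cc_0$, is the one step I would be careful to state explicitly rather than leave implicit.
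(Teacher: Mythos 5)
Your proposal is correct and matches the paper's own treatment, which likewise obtains Theorem \ref{theorem-main} by combining Proposition \ref{repfinite} (sufficiency) with Proposition \ref{repinfinite} (the contrapositive of necessity). Your explicit use of the lemma guaranteeing at least one transjective summand to upgrade ``not in two components'' to ``all transjective summands in a single component $\Cc_0$'' is precisely the implicit bridge in the paper's argument, spelled out correctly.
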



\section{The case $\tAa$.}\label{section A tilde}

\subsection{Geometric realization}\label{geomteric realization}
When the underlying graph $\overline{Q}$ is of type $\Aa$ or $\tAa$, the $m$-cluster categories and their corresponding $m$-cluster tilting objects were realized geometrically and studied in \cite{Ba,Mu}, and \cite{Tol,Gub}, respectively. These geometric realizations generalize those from \cite{CCS,ABCP} in the case of the unpunctured disc and the annulus.

\vspace{4pt}

\textit{\bf The case $\Aa$}: Let $\Pi$ be a disk with $nm+2$ marked points (or equivalently an $(nm+2)$-gon). A \emph{$m$-diagonal} is a diagonal dividing $\Pi$ into an $(mj+2)$-gon and an $(m(n-j) + 2)$-gon for some $ 1 \leq j \leq n-1/2 $. A $(m+2)$-angulation is a collection of non-intersecting $m$-diagonals that form a partition of $\Pi$ into $(m+2)$-gons. There are bijections:

\begin{center}
\begin{tabular}{ccc}
$m$-diagonals in $\Pi$ &$\leftrightarrow$ &indecomposable objects in $\Cc^m_{\Aa}$
\\
$(m+2)$-angulations of $\Pi$ &$\leftrightarrow$& $m$-cluster tilting objects in $\Cc^m_{\Aa}$

\end{tabular}
\end{center}

\textit{ \bf The case $\tAa$}: While in \cite{Tol,Gub} the authors use an annulus with marked points, we will use the universal cover given by the strip $\Sigma$, having copies of the $mp$, and $mq$, marked points on the boundary components that we denote $\mathcal{B}_p$, and $\mathcal{B}_q$, respectivelly. The points having the same label in a fixed boundary are considered up to equivalence given by congruence modulo $mp$ and $mq$. There are (isotopy classes of) arcs on $\Sigma$, called \emph{$m$-diagonals}. Each $m$-diagonal belongs to a family:
\begin{enumerate}
\item[$\ast$] \emph{Transjective}: an arc $\alpha$ having an endpoint $x$ in $\Bp$ and the other endpoint $y$ in $\Bq$. The labels in $x$ and $y$ are congruent modulo $m$.
\item[$\ast$] \emph{Regular on a $p$-tube}: an arc $\alpha$ having both endpoints over $\mathcal{B}_p$, starting at $u$ going in positive direction counting $u+km+1$ steps, with $k\geq 1$.
\item[$\ast$] \emph{Regular on a $q$-tube}: analogous to the previous case.
\end{enumerate}

As in the previous case, there are bijections

\begin{center}
\begin{tabular}{ccc}
$m$-diagonals in $\Sigma$ &$\leftrightarrow$ &indecomposable rigid objects in $\Cc^m_{\tAa}$
\\
$(m+2)$-angulations of $\Sigma$ &$\leftrightarrow$& $m$-cluster tilting objects in $\Cc^m_{\tAa}$

\end{tabular}
\end{center}

\begin{remark}  For a transjective arc, the common congruence class $u$ modulo $m$ in the endpoints labels $x,y$ indicates that the associated object sits in the $u$-th transjective component of $\Gamma(\Cc_Q^m)$. Also, the endpoints of the regular arcs mark the tranjective component where the associated object lies. See \cite[Section 2.3.2]{Gub}.
\end{remark}

Given an $(m+2)$-angulation $\Tt$ of $\Pi$ or $\Sigma$, the bound quiver $(Q_\Tt,I_\Tt)$ of the $m$-cluster tilted algebra $\Lambda$ defined by the associated $m$-cluster tilting object is obtained form the geometric configuration, see \cite{Gub,Mu}. We recall the geometric realizations of $\Cc^m_Q$ and the bound quiver construction in the following example.

\begin{figure}[h!]
\centering
\def\svgwidth{5.3in}
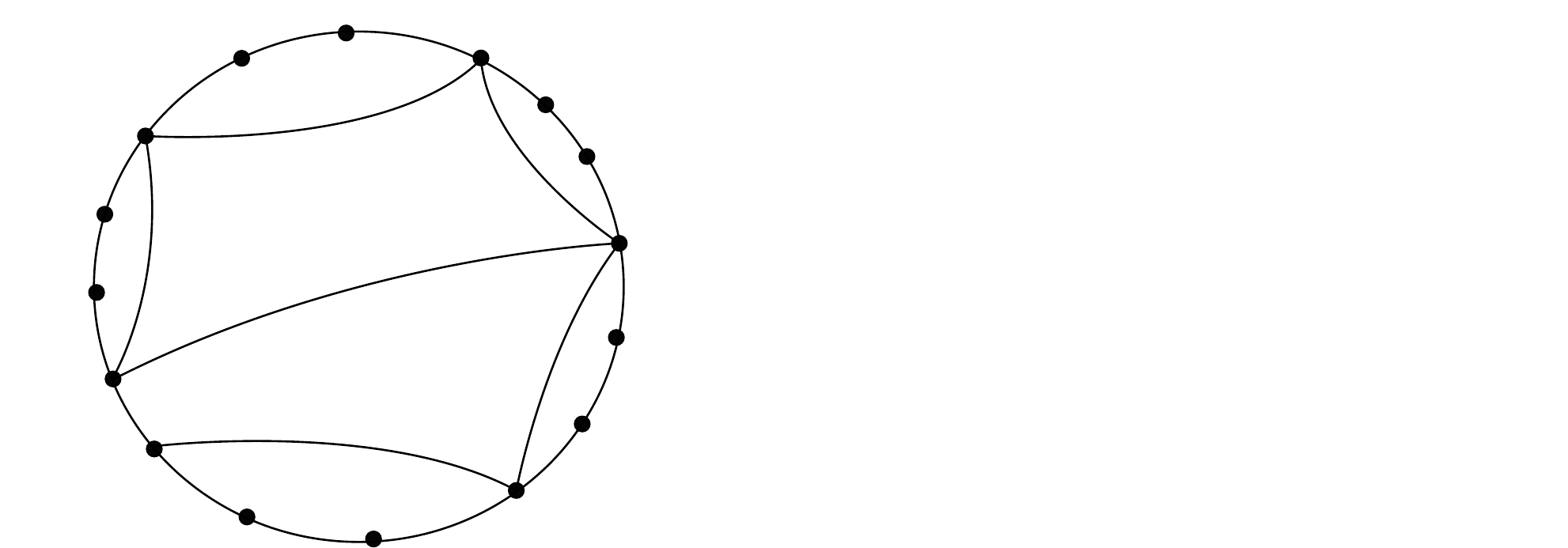
\caption{$4$-angulation of $\Pi$ (left) and $5$-angulation of $\Sigma$ (right), and bound quivers defined by them. The labels at the endpoints of transjective arcs are congruent} 
\label{ejemplo2}
\end{figure}

\begin{ejemplo}\label{ejemplo1}  In Figure \ref{ejemplo2}, we show the bound quiver defined by a $4$-angulation of $\Pi$ (left) that corresponds to a $2$-cluster tilting object in $\Cc^m_{\Aa_6}$, and a $5$-angulation of $\Sigma$ (right) that corresponds to a $3$-cluster tilting object in $\Cc^m_{\tAa}$ where $p=3$ and $q=2$. In both cases,
the vertices in $Q_\Tt$ are in one-to-one correspondence with the elements in $\Tt$. For any two vertices $i,j \in Q_\Tt$, there is an arrow $i \ra j$ when the corresponding $m$-diagonals $x_i$ and $x_j$ share a vertex, they are edges of the same $(m+2)$-gon and $x_i$ follows $x_j$ clockwise. Given consecutive arrows $i \xrightarrow{\alpha} j \xrightarrow{\beta} k$, then $\alpha \beta \in I_\Tt$ if and only if $x_i$, $x_j$ and $x_k$ are edges in the same $(m+2)$-gon. Observe that all the transjective $m$-diagonals in the $(m+2)$-angulation of $\Sigma$ are associated to objects in the $0$-th transjective component of $\Gamma(\Cc^m_Q)$.

\end{ejemplo}

\begin{definicion}\label{saturated} We say that a cycle $a_1 \xra{\alpha_1} a_2 \to \cdots \to a_n \xra{\alpha_n} a_1 $ is \emph{saturated} if $\alpha_i \alpha_{i+1} \in I_\Tt$ for all $i$ where the index runs over all congruence classes modulo $n$.
\end{definicion}

\begin{definicion}\label{root cycle}
Let $(Q_\Tt, I_\Tt)$ be a bound quiver defined from an $(m+2)$-angulation of the strip $\Sigma$. We say that $(Q_\Tt,I_\Tt)$ has a \emph{root cycle} $\mathfrak{C}$ if $Q_\Tt$ contains a closed reduced walk that is not a saturated cycle.

\end{definicion}

It is simple to verify that a bound quiver $(Q_\Tt,I_\Tt)$ arising form a $(m+2)$-angulation of $\Sigma$ has the following properties:

\begin{enumerate}
\item[i)] $(Q_\Tt,I_\Tt)$ is gentle.
\item[ii)] If $Q_\Tt$ has a root cycle with no relations, then it is the only non-oriented closed reduced walk.
\end{enumerate}

In Example \ref{ejemplo1}, we have a root cycle given by $1\ra 3 \la 4 \ra 5 \la 1$. The representation type of gentle (and more generally string) algebras can be determined in terms of bound quivers, by a result from \cite{BuR}. A gentle algebra $\Lambda=kQ/I$ is always tame, and it is of finite representation type if and only if $(Q,I)$ does not have a closed reduced walk $w$ in which no subpath belongs to $I$.

\begin{lema}\label{lema trans}
Let $\Tt$ be an $(m+2)$-angulation of $\Sigma$, and $m \geq 2$. Let $\Pi_0$ be an $(m+2)$-gon with edges $b_u$ and $b_w$, such that both vertices of $b_u$ lie in $\Bp$, respec. $\Bq$. Then, $\Pi_0$ has two transjective $m$-diagonals $x_1, x_2$ as edges, and the associated objects lie in different transjective components of $\Gamma (\Cc^m_{\tAa})$.

\end{lema}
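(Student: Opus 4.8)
The plan is to read off the conclusion from the combinatorics of the strip $\Sigma$ together with the congruence rule for transjective arcs, the only genuinely nontrivial input being a convexity-type fact about the faces of an $(m+2)$-angulation.

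First I would fix the labelling conventions of the model: the marked points of $\Bp$ (resp. $\Bq$) are labelled cyclically by $\mathbb{Z}/mp$ (resp. $\mathbb{Z}/mq$); a transjective $m$-diagonal joins a point $x \in \Bp$ to a point $y \in \Bq$ whose labels satisfy $x \equiv y \pmod m$, and by the Remark preceding the statement its common residue $x \bmod m$ is exactly the index $u$ of the transjective component $\Cc_u$ containing the associated object. Regular $m$-diagonals have both endpoints on a single boundary component, and a boundary edge of a face, such as $b_u$ or $b_w$, joins two \emph{consecutive} marked points of the corresponding boundary.

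The main step, and the step I expect to be the main obstacle, is structural: because $\Pi_0$ is a single face of the $(m+2)$-angulation of $\Sigma$ carrying one boundary edge $b_u \subset \Bp$ and one boundary edge $b_w \subset \Bq$, I must show that $\partial \Pi_0$ meets $\Bp$ in a single contiguous run of consecutive marked points and meets $\Bq$ in a single such run, these two runs being joined by exactly two transjective $m$-diagonals $x_1, x_2$ and by no regular $m$-diagonal. I would deduce this from the geometric realization: realizing the non-crossing $m$-diagonals as geodesics in the strip (with $\Bp, \Bq$ as geodesic boundary) makes each face a convex ideal polygon, and a convex region meets each of the two boundary lines in a connected arc; hence exactly two transjective edges separate the $\Bp$-run from the $\Bq$-run, while the remaining edges are forced to be the boundary segments of those runs. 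Equivalently, one argues directly that if $\partial \Pi_0 \cap \Bp$ were disconnected, then an $m$-diagonal of $\Pi_0$ would have to enclose the intervening marked points, contradicting that $\Pi_0$ is a face; this is where the non-crossing hypothesis and the disc structure of $\Sigma$ really enter.

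Granting this structure, the rest is a counting and congruence computation. Writing $\alpha \geq 1$ for the number of boundary segments of $\Pi_0$ on $\Bp$ (one being $b_u$) and $\beta \geq 1$ for the number on $\Bq$ (one being $b_w$), the total edge count gives $\alpha + \beta + 2 = m+2$, hence $\alpha + \beta = m$ and therefore $1 \leq \alpha \leq m-1$, a nonempty range precisely because $m \geq 2$. Let $s$ and $s+\alpha$ be the labels of the two extreme marked points of the $\Bp$-run; these are exactly the $\Bp$-endpoints of $x_1$ and $x_2$, since every interior point of the run carries two boundary segments while each extreme point carries a boundary segment and a transjective arc. By the congruence rule the object attached to $x_1$ lies in $\Cc_{(s+\alpha) \bmod m}$ and the one attached to $x_2$ lies in $\Cc_{s \bmod m}$; as their $\Bp$-labels differ by $\alpha$ with $0 < \alpha < m$, we get $(s+\alpha) \not\equiv s \pmod m$, so $x_1$ and $x_2$ sit in two distinct transjective components of $\Gamma(\Cc^m_{\tAa})$, as claimed. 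I would finally sanity-check this on Example \ref{ejemplo1}, where $m=3$ forces $(\alpha,\beta) \in \{(1,2),(2,1)\}$ and the two transjective edges indeed land in different components.
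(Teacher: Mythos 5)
There is a genuine gap, and it sits exactly where you predicted the main obstacle would be: your structural claim that $\partial\Pi_0$ meets $\Bp$ in a contiguous run of \emph{consecutive} marked points joined only by boundary segments, with no regular $m$-diagonal among the edges of $\Pi_0$, is false. A regular arc with both endpoints on $\Bp$ encloses a region subdivided into $(m+2)$-gons, and the face on the \emph{other} side of that arc has it as an edge; the paper's proof explicitly allows this (in Figure \ref{figura lema} the edge $b_2$ of $\Pi_0$ is a regular $m$-diagonal, not a boundary segment). Concretely, for $m=2$ take a regular arc on $\Bp$ spanning $m+1=3$ boundary segments and enclosing a quadrilateral; the adjacent outer face can be a quadrilateral whose edges are that regular arc, one boundary segment of $\Bq$, and two transjective arcs. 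Your count then breaks down at both steps: the number $\alpha$ of boundary segments of $\Pi_0$ on $\Bp$ is $0$, so $\alpha+\beta+2=m+2$ fails, and the $\Bp$-endpoints of $x_1,x_2$ are not at distance $\alpha$ along the boundary but at distance $3$. Your convexity argument does correctly show that the trace of $\Pi_0$ on each boundary is connected and that there are exactly two transjective edges, but it cannot show that the edges along $\Bp$ are boundary segments, because they need not be.

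The repair is the counting the paper actually performs: classify the $m$ non-transjective edges of $\Pi_0$ into $E^p$ (those with vertices on $\Bp$) and $E^q$, both nonempty by hypothesis, so $1\leq |E^p| = r \leq m-1$. Walking along $\Bp$ from the endpoint $a$ of $x_1$ to the endpoint $c$ of $x_2$, a boundary edge advances the label by $1$, while a regular edge $b_i$ advances it by $mn_i+1$ (it encloses an $(mn_i+2)$-gon, hence spans $mn_i+1$ boundary segments), which is again $1$ modulo $m$. Hence the label difference is congruent to $r$ modulo $m$, and since $0<r<m$ the labels $a$ and $c$ are not congruent modulo $m$; by the congruence rule you correctly quoted, the objects attached to $x_1$ and $x_2$ lie in different transjective components. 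So your residue argument survives, but only after replacing ``each $\Bp$-side edge is a boundary segment contributing exactly $1$'' by ``each $\Bp$-side edge contributes $1$ modulo $m$'' --- and that modular statement, which is where the hypothesis that $\Tt$ is an $(m+2)$-angulation enters through the $mn_i+2$ edge count of the enclosed polygons, is precisely the content your write-up is missing.
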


\begin{proof}
Let $\Pi_0$ be as above, we illustrate the general situation in Figure \ref{figura lema}. In order to have a $(m+2)$-gon we need two edges $x_1$ and $x_2$. We will see that they are different.

\begin{figure}[h!]
\centering
\def\svgwidth{4in}
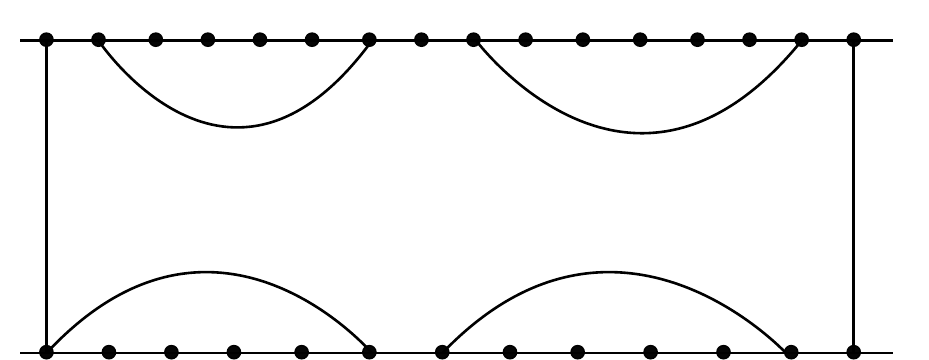
\caption{Lema \ref{lema trans}. Such configuration does not allow the labels $a$ and $c$ to be congruent modulo $m$.}
\label{figura lema}
\end{figure}

The other edges of $\Pi_0$ are either boundary edges (as $b_1$ on Figure \ref{figura lema}) or regular $m$-diagonals (as it is $b_2$). Let $E^p =\{ b_1, \ldots, b_r \}$ be the edges having vertices on $\Bp$ and $E^q= \{ b_{r+1}, \ldots, b_{m} \}$ be the edges having vertices on $\Bq$. By hypothesis the sets $E^p, E^q$ are non empty, adding the information that $\Pi_0$ has $m+2$ edges, we conclude that each set has cardinality at most $m-1$.
If $b_i$ is an $m$-diagonal, then it encloses a polygon $\Pi^p_t$, and since $b_i$ is part of an $(m+2)$-angulation $\Pi^p_t$ is force to have $m n_i +2$ edges ($n_i$ is a positive integer). Then $\Pi^p_t$ has $m n_i +1$ edges that are segments lying on the boundary. Let $a$ be the label in the vertex of $x_1$ lying on $\Bp$. Now we count the number of boundary segments from $a$ to $c$. After $b_1$, we add one to $a$. When we reach the right endpoint of $b_2$ we add $m n_i +1$, that is we add $1$ modulo $m$. When we reach $c$, we would have added $r$ modulo $m$, but $r= \vert E^p \vert $ and $1 \leq \vert E^p \vert  \leq m-1$, so the labels $c$ and $a$ are not congruent modulo $m$. Therefore the objects associated to $m$-diagonals $x_1$ and $x_2$ are in different transjective components.\end{proof}

It was already observed in \cite[Proposition 4.3]{Tol} that given an $(m+2)$-angulation $\Tt$ of $\Sigma$, there is at least one transjective $m$-diagonal $x \in \Tt$. 

In the case $m \geq 2$, the previous lemma shows that when the $(m+2)$-angulation $\Tt$ has an $(m+2)$-gon with two edges one with endpoints in $\Bp$, and the other one in $\Bq$, then $\Tt$ is forced to have at least two transjective $m$-diagonals. If there is no $(m+2)$-gon having this property in $\Tt$, then the transjective $m$-diagonals form zig-zags and fans, see Figure \ref{figura sec5}. Those zig-zag and fans will have at least two transjective $m$-diagonals. In either case we observe that for $m \geq 2$ there is at least two transjective $m$-diagonals in every $(m+2)$-angulation.

\begin{teorema}\label{teorema A}
Let $\Lambda_\Tt = \End_{\Cc^m} (T)$ be an $m$-cluster-tilted algebra arising from an $m$-cluster category of type $\tAa$. Then,

\begin{enumerate}
\item[(a)] $\Lambda_\Tt$ arises at the same time from an $m$-cluster category of type $\Aa$ if and only if $Q_\Tt$ does not have a root cycle.

\item[(b)] $\Lambda_\Tt$ is of infinite representation type if and only if all transjective summands of $T$ lie in the same AR component.
\end{enumerate}

\end{teorema}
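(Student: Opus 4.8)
The plan is to deduce (b) from the homological main theorem and to prove (a) by matching the two geometric classifications, with the simple connectivity of the disk forcing the absence of a root cycle. I treat the two parts separately.

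For (b) I would simply specialize Theorem \ref{theorem-main} to the Euclidean type $\tAa$ and read it through the arc dictionary. By the Remark following the bijections, the transjective summands of $T$ are exactly the transjective $m$-diagonals of $\Tt$, and two such arcs lie in the same transjective component of $\Gamma(\Cc^m_{\tAa})$ precisely when the common residue modulo $m$ of their endpoint labels agrees; hence ``all transjective summands of $T$ lie in one AR component'' is the negation of ``$T$ has summands in at least two different transjective components,'' and (b) is the contrapositive of Theorem \ref{theorem-main}. Alternatively, a self-contained geometric proof runs through the gentle criterion of \cite{BuR}: since $(Q_\Tt,I_\Tt)$ is gentle, $\Lambda_\Tt$ is representation infinite iff it has a band, i.e. a closed reduced walk with no subpath in $I_\Tt$; one shows that such a band is forced to wind once around $\Sigma$, aligning all transjective arcs in a single component, while Lemma \ref{lema trans} shows that two transjective arcs in different components produce a bridging $(m+2)$-gon whose relations prevent any such band.

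For the forward direction of (a), suppose $\Lambda_\Tt$ also arises from an $(m+2)$-angulation $\Tt'$ of a disk $\Pi$. Since $\Lambda_\Tt$ is gentle, its bound quiver is an invariant of the algebra, so $(Q_\Tt,I_\Tt)\cong(Q_{\Tt'},I_{\Tt'})$ and it suffices to see that a disk angulation quiver has no root cycle. Because $\Pi$ is simply connected the dual graph of $\Tt'$ is a tree, and two distinct cells share at most one $m$-diagonal; hence a reduced walk in $Q_{\Tt'}$ cannot leave an $(m+2)$-gon and return without backtracking, so every closed reduced walk is confined to a single internal $(m+2)$-gon. By the arrow-and-relation rule recalled in Example \ref{ejemplo1}, the diagonals of such a gon form an oriented cycle all of whose length-two subpaths lie in $I_{\Tt'}$, i.e. a saturated cycle. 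Thus no closed reduced walk of $Q_{\Tt'}\cong Q_\Tt$ fails to be saturated, which is precisely the absence of a root cycle.

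For the converse I would assume that every closed reduced walk of $Q_\Tt$ is saturated and unfold $\Sigma$ into a polygon realizing the same algebra. The no-root-cycle hypothesis means the transjective arcs neither form a band nor a genuine zig-zag winding the strip, so following them around $\Sigma$ yields at most one oriented saturated cycle; cutting $\Sigma$ open along the boundary walk dual to the resulting fan opens the strip into a disk $\Pi$, and one checks from the local arrow/relation rule that the induced angulation $\Tt'$ satisfies $(Q_{\Tt'},I_{\Tt'})=(Q_\Tt,I_\Tt)$, so that $\Lambda_\Tt=\Lambda_{\Tt'}$ arises from type $\Aa$; equivalently one compares the explicit bound-quiver descriptions of \cite{Mu} and \cite{Gub} and observes that the $\Aa$-list consists exactly of the gentle quivers whose closed reduced walks are all saturated. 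The main obstacle is precisely this last direction: verifying that the absence of a root cycle genuinely permits unfolding the strip to a disk while preserving every arrow and every zero-relation incident to the cut, with the saturated winding cycle neither concealing a band that would block the cut nor altering the gentle pair under unfolding; this is exactly the point at which the simple connectivity of the disk is traded for the combinatorial saturation condition.
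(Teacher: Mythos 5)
Your treatment of part (b) is correct but takes a genuinely different route from the paper: you specialize Theorem \ref{theorem-main} through the arc dictionary, whereas the paper deliberately gives an independent geometric proof --- by the criterion of \cite{BuR}, infinite representation type forces a closed reduced walk with no relations, and in $Q_\Tt$ such a walk can only come from the transjective $m$-diagonals forming fans and zig-zags as in Figure \ref{figura sec5}, so all their endpoint labels are congruent modulo $m$ and all transjective summands lie in one component. Your shortcut is legitimate (and your alternative sketch is essentially the paper's argument); what the paper's route buys is an independent geometric-combinatorial confirmation of the main theorem in type $\tAa$, which is the stated purpose of the section, while yours buys brevity. The forward half of (a) is also sound in substance, though the step ``the bound quiver of a gentle algebra is an invariant of the algebra'' is exactly the nontrivial uniqueness of gentle presentations and should either be cited or replaced, as the paper implicitly does, by the observation that the bound quivers arising from disk $(m+2)$-angulations are classified in \cite{Mu} and contain only saturated cycles.

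The genuine gap --- which you flag yourself --- is the converse half of (a). The paper's key step, missing from your proposal, is the combinatorial equivalence: $Q_\Tt$ has no root cycle if and only if the $(m+2)$-angulation contains a cell $\Pi_0$ with two boundary edges $b_p \subset \Bp$ and $b_q \subset \Bq$ (a ``bridging'' $(m+2)$-gon). This cell tells you exactly where to cut: draw an auxiliary arc $a$ inside $\Pi_0$ joining the two boundary components (Figure \ref{propo2a}); since $a$ crosses no $m$-diagonal, slitting $\Sigma$ along $a$ leaves every $m$-diagonal, hence every arrow and every relation, untouched, and one then adds marked points on the two copies $a$, $a'$ so that the two halves $\Pi_1$, $\Pi_2$ of $\Pi_0$ become genuine $(m+2)$-gons and the resulting polygon has $nm+2$ marked points. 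Your recipe of ``cutting along the boundary walk dual to the resulting fan'' secures neither property: a cut not confined to a single cell severs $m$-diagonals and changes $(Q_\Tt,I_\Tt)$, and without the added marked points the cut pieces of $\Pi_0$ are not $(m+2)$-gons, so the slit surface carries no $(m+2)$-angulation at all. Your reading of the hypothesis is also off: the absence of a root cycle does not say the transjective arcs yield ``at most one oriented saturated cycle'' --- winding fans and zig-zags are precisely what produce a root cycle \emph{without} relations, and a root cycle \emph{with} relations (the derived discrete examples of \cite{Gub}) equally obstructs type $\Aa$ --- it is equivalent to the existence of the bridging cell, from which Lemma \ref{lema trans} then produces two transjective summands in different components, consistently with part (b).
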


\begin{proof}
(a) $Q_\Tt$ does not have a root cycle if and only if the $(m+2)$-angulation has a $(m+2)$-gon $\Pi_0$ with (at least) two edges $b_p, b_q$ such that $b_p \subset \Bp$ and $b_q \subset \Bq$.

\begin{figure}[h!]
\centering
\def\svgwidth{6in}
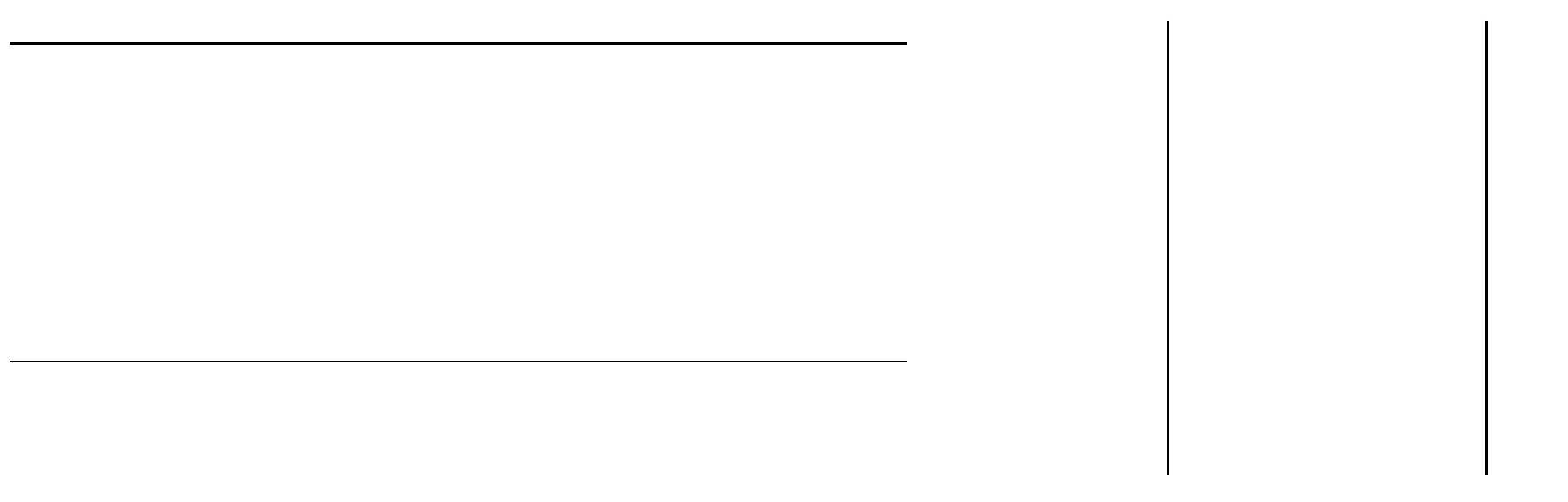
\caption{Defining a $(m+2)$-angulation of a polygon from a $(m+2)$-angulation of $\Sigma$.}
\label{propo2a}
\end{figure}

Denote by $b_p$ the edge of $\Pi_0$ contained in $\Bp$ that has a minimal value in its labels. By the same rule, denote $b_q$ the edge in $\Bq$. By Lemma \ref{lema trans}, the transjective $m$-diagonals $x_1$ and $x_2$ are associated to objects lying in different transjective components. Draw an arc $a$ as in Figure \ref{propo2a}, notice that this arc does not need to be an $m$-diagonal. Cut the strip $\Sigma$ trough $a$, in this way we have a polygon with edges $a$ and $a'$ as we see in Figure \ref{propo2a} (right). The polygon is almost $(m+2)$-angulated, all sub-polygons except $\Pi_1$ and $\Pi_2$ are $(m+2)$-gons. Finally, add marked points to $a$ and $a'$. In this way we obtain an $(nm+2)$-gon $\Pi$, and has an $(m+2)$-angulation obtained from $\Tt$ not adding any other $m$-diagonal or changing their relative positions. Then, $(Q_\Tt,I_\Tt)$ can be obtained as well as the bound quiver arising from a $(m+2)$-angulation of a polygon. Therefore $\Lambda_\Tt = k Q_\Tt/ I_\Tt$ is, also, an $m$-cluster-tilted algebra arising from a $m$-cluster category of type $\Aa$.

(b) For a gentle algebra to be of infinite representation type we need a root cycle with no relations, according to \cite{BuR}. The only way to obtain such a root cycle in  $Q_\Tt$ is that all the transjective $m$-diagonals form fans and zig-zags as in the Figure \ref{figura  sec5}. Hence, for all the endpoints in these $m$-diagonals we have labels all congruent modulo $m$. This means that the corresponding objects are all in the same transjective component.\end{proof}

\begin{figure}[h!]
\centering
\def\svgwidth{4.8in}
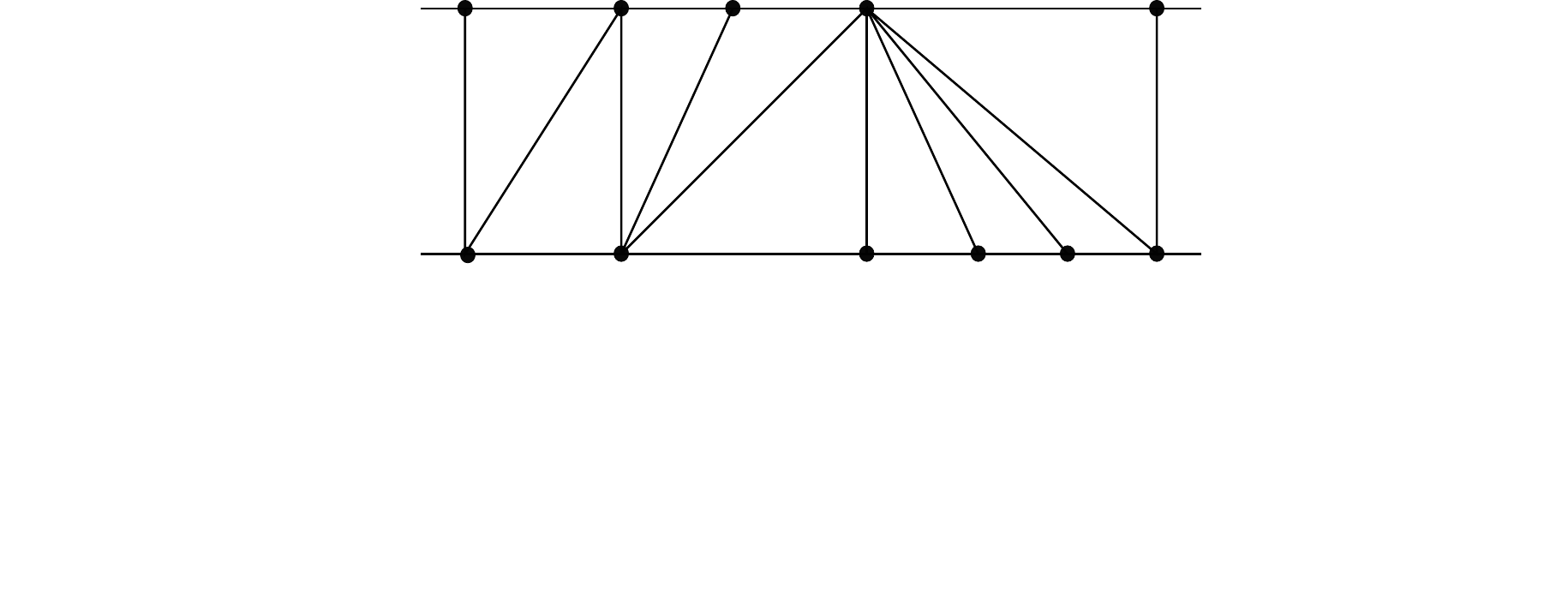
\caption{(a) A root cycle without zero relations in $Q_\Tt$ arises from fans and zig-zags; (b) Cases of summands of $T$ in regular components that lead to representation infinite algebras.}
\label{figura sec5}
\end{figure}

We can have representation infinite $m$-cluster tilted algebras such that $T$ has summands in both regular components that lie in $\mmod H [0]$. An example of this is Figure \ref{figura sec5} (b) left. In Figure \ref{figura sec5} (b) center,  we have an infinite representation case where $T$ has summands in different regular components, one in $\mmod H [0]$ and the other one in $\mmod H[1]$. The remaining example on the right corresponds to $T$ having only one summand in a regular component.

We show now more examples of $m$-cluster tilted algebras arising from $(m+2)$-angulations.

\begin{ejemplo}\label{Ejemplo 7} A representation finite $m$-cluster tilted algebra at the same time arising from an $m$-cluster category of type $\tAa$ and from an $m$-cluster category of type $\Aa$. Figure \ref{figura EJ1} (a).

\begin{ejemplo}\label{ejemplo-derived-discrete} A representation finite $m$-cluster tilted algebra arising from an $m$-cluster category of type $\tAa$ which is derived discrete, see \cite{V} and \cite{BGS}, Figure \ref{figura EJ1} (b).

\end{ejemplo}

\begin{figure}[h!]
\centering
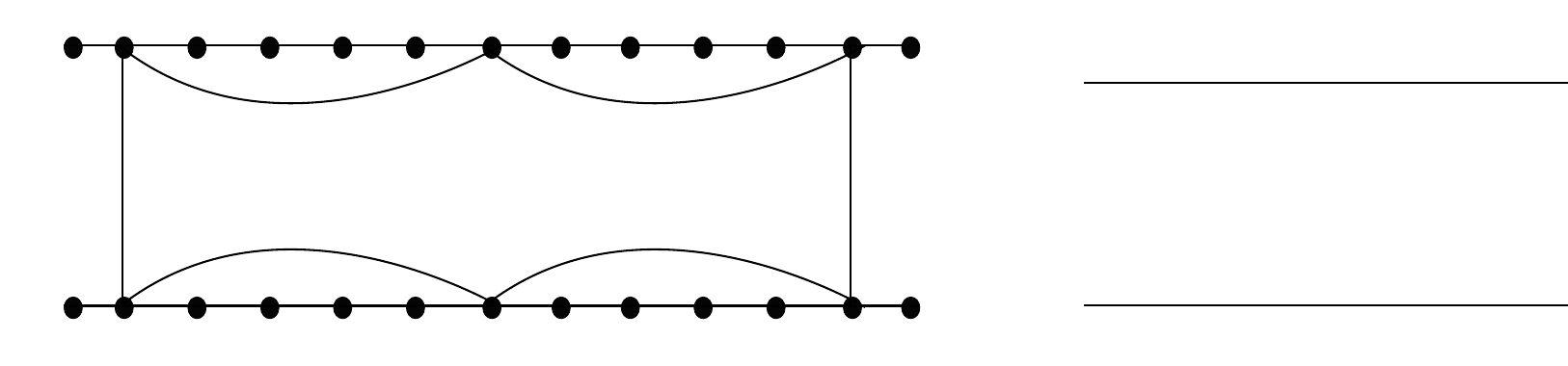
\def\svgwidth{4.2in}
\caption{Examples \ref{Ejemplo 7} and \ref{ejemplo-derived-discrete}}
\label{figura EJ1}
\end{figure}

\end{ejemplo}

 As it was observed in the introduction, a representation finite $m$-cluster tilted algebra arising from type $\tAa$ does not need to be an $m$-cluster tilted algebra of type $\Aa$. Example \ref{ejemplo-derived-discrete} is an example of this.

\begin{ejemplo} \label{repinfiniteAntilde} A representation finite $m$-cluster tilted algebra arising from an $m$-cluster category of type $\tAa$ which is representation infinite.

\begin{figure}[h!]
\def\svgwidth{4.8in}
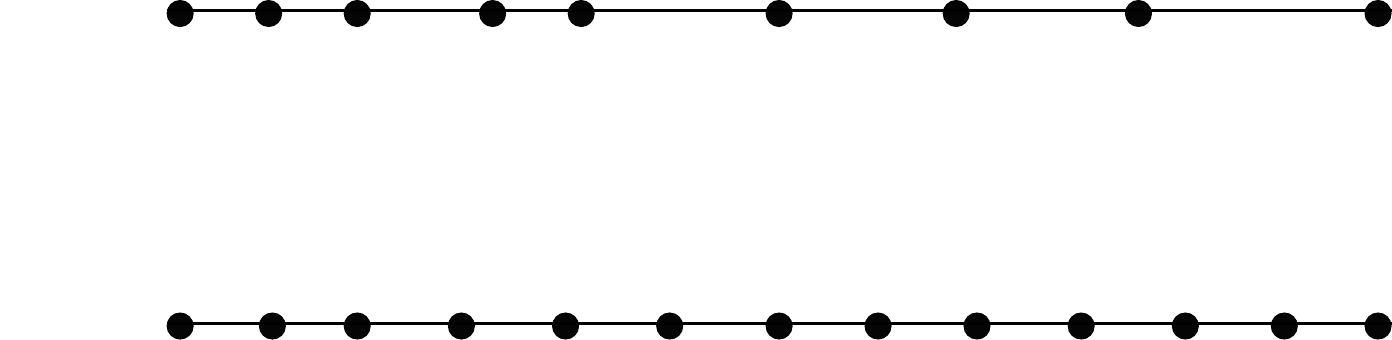
\caption{Example \ref{repinfiniteAntilde}.}
\label{figura EJ5}
\end{figure}

The $(m+2)$-angulation in Figure \ref{figura EJ5} defines the bounded quiver:
\begin{center}
\begin{tikzcd}[column sep=small]
5 \arrow[rr]&& 3  \\
1 \arrow[rr, shift left,"\beta"]
\arrow[rr, shift right,"\alpha_1"']
 && 2 \arrow[u,"\gamma"]\arrow[rr,"\alpha_2"'] && 4\arrow[d,"\alpha_3"]\\
&& 7\arrow[llu,"\alpha_5"] && 6\arrow[ll,"\alpha_4"] \end{tikzcd}
with relations $\alpha_i \alpha_{i+1}=0$ for $i$ integer modulo $6$, and $\beta \gamma=0$.
\end{center}

\end{ejemplo}

\subsection{The representation infinite case - Realization as $m$-relation extensions}\label{repre finite}

In this section we are going to characterize the $m$-cluster tilted algebras of infinite representation type arising
from an $m$-cluster category of type $\tAa$ as being iterated tilted algebras or $m$-relation extensions of iterated tilted algebras of type $\tAa$.

We start recalling some results. We recall from \cite{Gub} that if a connected algebra is an $m$-cluster tilted algebra arising from $\tAa$ then it has a description in terms of quiver with relations. Combining
 this with \cite{BuR}, we get:

 \begin{remark} \label{quiverconrelaciones} A connected algebra $A =KQ/I$ is an $m$-cluster tilted algebra arising from  $Q$ of type $\tAa$ of infinite representation type if and only if $(Q,I)$ is a gentle bound quiver satisfying the following conditions:
 \begin{itemize}
 \item [a)] It contains a unique non-saturated root cycle $\mathfrak{C}$ without zero relations.
 \item[b)] If the quiver contains more cycles, then all of them are saturated cycles.
 \item[c)] Outside of a saturated cycle it can contain at most $m-1$ consecutive relations.
 \end{itemize}
 \end{remark}

 Recall that the trivial extension of $B$ by $M$
is the algebra $B \semidirprod  M$ with underlying vector space $B
\oplus M$, and multiplication given by $(a,m)(a',m') = (aa',am' +
ma')$, for any $a,\;a' \in B$ and $m,\;m' \in M$. For further
properties of trivial extensions we refer the reader to
\cite{FGR}.

Let $B$ be an algebra of global dimension at most
$m+1$ and $DB= {\rm Hom}_k(B,k)$. The trivial extension
$\mathcal{R}_m(B)=B \semidirprod  \Ext^{m+1}_B(DB,B)$ is called
the $m$-relation extension of $B$.

This concept is the $m$\emph{-ified}
analogue of the notion of relation extension introduced in
\cite{ABS}. In this work the authors proved that an algebra ${C}$
is a cluster tilted algebra if and only if it is the relation
extension of some tilted algebra $B$.

We are now in a position to state our result.

\begin{teorema} \label{teorema-extensiones}
Let $A$ be a connected algebra of infinite representation type. The following are equivalent.
\begin{itemize}
\item[(a)] $A$ is an $m$-cluster tilted algebra arising from a quiver of type $\tAa$.
\item[(b)] Is one of the following algebras
\begin{itemize}
\item [(i)] $A$ is an iterated tilted algebra of type $\tAa$, or
\item [(ii)] $A = \mathcal{R}_m(B)=B \semidirprod  \Ext^{m+1}_B(DB,B)$, is the $m$-relation extension of the algebra $B$, where $B = \End_{\Db(H)} (T)$ is an iterated tilted algebra of type $\tAa$ of global dimension $m+1$, with $T$ a tilting complex in the fundamental domain $\Ss$.
\end{itemize}
\end{itemize}
  \end{teorema}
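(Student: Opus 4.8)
The plan is to route the proof through the silting/cluster-tilting dictionary together with the homological description of $A$ coming from \cite{FPT,AGS}, and then to separate the two alternatives by the global dimension of an associated iterated tilted algebra, cross-checking with the combinatorial description of Remark \ref{quiverconrelaciones}.

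First I would prove the implication $(a)\Rightarrow(b)$. Writing $A=\End_{\Cc^m(H)}(\tilT)$ with $H$ hereditary of type $\tAa$, I choose a representative $T$ of $\tilT$ in the fundamental domain $\Ss$. By \cite[Proposition 2.4]{BRT} the complex $T$ is silting, and since it has the maximal number of indecomposable summands and generates $\Db(H)$ it is in fact a tilting complex; hence $B:=\End_{\Db(H)}(T)$ is derived equivalent to $H$, so $B$ is an iterated tilted algebra of type $\tAa$. The morphism space in the orbit category splits as $\End_{\Cc^m(H)}(\tilT)=\End_{\Db(H)}(T)\oplus(\text{extra orbit morphisms})$, and using Serre duality together with the triangle equivalence $\Db(H)\simeq\Db(B)$ (the Serre functor being $\nu_B=-\otimes^{L}DB$, $\tau_B=\nu_B[-1]$) one identifies the extra morphisms with $\Ext^{m+1}_B(DB,B)$. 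This is exactly the content of \cite{FPT,AGS} and yields $A\cong B\semidirprod\Ext^{m+1}_B(DB,B)=\mathcal{R}_m(B)$ as algebras. Because $T$ lies in $\Ss$, which spans the shifts $0,\ldots,m$, the global dimension of $B$ is at most $m+1$.

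Next I would split according to the global dimension of $B$. If it is at most $m$, then $\Ext^{m+1}_B(DB,B)=0$ and $A=B$ is iterated tilted of type $\tAa$, which is alternative $(i)$. If it equals $m+1$, then $A=\mathcal{R}_m(B)$ with nonzero bimodule, which is alternative $(ii)$. To see that these are genuinely the two listed alternatives and are mutually exclusive, I would invoke Remark \ref{quiverconrelaciones}: a saturated cycle forces infinite global dimension, so $B$, having finite global dimension, contains no saturated cycle, while the arrows produced by $\Ext^{m+1}_B(DB,B)$ are precisely what close a saturated cycle in $A$. Thus $(i)$ is the subcase where $(Q_\Tt,I_\Tt)$ has no saturated cycle and $(ii)$ the subcase where it does; condition $(c)$ of Remark \ref{quiverconrelaciones}, namely at most $m-1$ consecutive relations outside a saturated cycle, is what pins the global dimension of $B$ to the value $m+1$ in case $(ii)$, and throughout the unique non-saturated root cycle $\mathfrak{C}$ guarantees that $A$ stays representation infinite.

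For the converse $(b)\Rightarrow(a)$ I would reverse these steps. Given $(i)$, the iterated tilted algebra $A$ of type $\tAa$ has finite global dimension $g$; choosing $m\geq g$ and placing a tilting complex $T$ with $\End_{\Db(H)}(T)=A$ inside the fundamental domain of $\Cc^m(H)$, the vanishing $\Ext^{m+1}_A(DA,A)=0$ gives $\End_{\Cc^m(H)}(\tilT)=A$, so $A$ is $m$-cluster tilted of type $\tAa$ (this is the mechanism of the proposition showing that iterated tilted algebras are $r$-cluster tilted for $r$ large). Given $(ii)$, the cited result of \cite{FPT} applied to $T\in\Ss$ with $\End_{\Db(H)}(T)=B$ of global dimension $m+1$ gives directly $\End_{\Cc^m(H)}(\tilT)=\mathcal{R}_m(B)=A$, again an $m$-cluster tilted algebra of type $\tAa$. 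In both cases the standing hypothesis of infinite representation type is available and consistent, since iterated tilted algebras of type $\tAa$ are tame of infinite representation type. I expect the main obstacle to be the bookkeeping in the case of global dimension $m+1$: one must check that the arrows and relations added by $\mathcal{R}_m(B)$ match exactly the saturated-cycle structure of Remark \ref{quiverconrelaciones} without disturbing $\mathfrak{C}$, and that the gentle-algebra projective resolutions, governed by chains of consecutive relations, force the equivalence ``a saturated cycle of the appropriate length is present if and only if the global dimension is $m+1$''; this is the technical heart of the argument.
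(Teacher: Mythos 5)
Your route through the silting/tilting dictionary breaks down at its very first step, and the break is fatal to the whole $(a)\Rightarrow(b)$ direction. By \cite[Proposition 2.4]{BRT} (quoted in the paper), a representative $T\in\Ss$ of an $m$-cluster tilting object is only a \emph{silting} complex, i.e.\ $\Hom_{\Db(H)}(T,T[i])=0$ for $i>0$; your claim that maximality of the number of summands plus generation of $\Db(H)$ upgrades $T$ to a \emph{tilting} complex is false, because tilting additionally requires $\Hom_{\Db(H)}(T,T[-j])=0$ for $j>0$, and such negative-degree morphisms are generically nonzero as soon as $T$ has summands in different shifts $\mmod H[i]$ of the fundamental domain: if $T_a\in\mmod H$ and $T_b=Y[1]$ with $\Hom_H(T_a,Y)\neq 0$, then $\Hom_{\Db(H)}(T,T[-1])\neq 0$. (The paper's own Example \ref{Ejemplo motivacion} exhibits this: $\Hom_{\Db(H)}(T_2,T_4[-1])=\Hom_H(I_4,I_2)\neq 0$.) Consequently $B:=\End_{\Db(H)}(T)$ need not be derived equivalent to $H$, so you have no grounds for asserting that $B$ is iterated tilted of type $\tAa$, nor for identifying the orbit morphisms with $\Ext^{m+1}_B(DB,B)$. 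Indeed, \cite{FPT} proves only the direction ``relation extension of a suitable iterated tilted algebra $\Rightarrow$ $m$-cluster tilted'' and explicitly \emph{asks} when the converse realization exists, so the converse cannot be ``exactly the content'' of that paper; producing a tilting complex whose endomorphism algebra serves as $B$ (it is not the silting representative of $\tilT$) is the actual content of the theorem.

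The paper sidesteps this entirely and argues combinatorially on the gentle bound quiver. By Remark \ref{quiverconrelaciones}, $(Q,I)$ has a unique non-saturated root cycle $\mathfrak{C}$ and all other cycles saturated. If there are no saturated cycles, \cite[Theorem A]{AS} gives case (i) directly. Otherwise one performs an admissible cut in the sense of \cite[Definition 4.6]{BFPPT}: killing $\mathfrak{C}$ yields $A/\langle\mathfrak{C}\rangle=B_1\times\cdots\times B_n$ with each $B_i$ an $m$-cluster tilted algebra of type $\Aa$ \cite{Mu}; cutting one arrow in each saturated cycle produces gentle tree algebras $C_i$ with at most $m-1$ consecutive relations, hence iterated tilted of type $\Aa$ by \cite{AH}, with $B_i=\mathcal{R}_m(C_i)$ by \cite{Mu} and \cite[Remark 4.3]{FPT}; gluing the cut branches back onto the root cycle, the algebra $B=A/\langle\alpha_{i_j}\rangle$ is a representation infinite iterated tilted algebra of type $\tAa$ of global dimension $m+1$ by \cite[Theorem A]{AS}, and $A=\mathcal{R}_m(B)$; the implication $(b)\Rightarrow(a)$ is then exactly \cite[Theorem 2.5]{FPT}. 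Your heuristic that saturated cycles correspond to the arrows contributed by $\Ext^{m+1}_B(DB,B)$ is the right intuition, but it is the conclusion of this cut-and-glue analysis, not a substitute for it. A secondary defect: in your $(b)\Rightarrow(a)$ for case (i) you enlarge $m$ (``choosing $m\geq g$''), which proves only that $A$ is $r$-cluster tilted for some large $r$, whereas the theorem asserts the equivalence for the fixed $m$ appearing in (a) and (ii).
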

\begin{proof}
$b)$ implies $a)$. It follows from \cite[Theorem 2.5]{FPT}.

\smallskip

$a)$ implies $b)$.
Let $A$ be a representation infinite $m$-cluster tilted algebra arising from a quiver of type $\tAa$. Then, by Remark \ref{quiverconrelaciones}, $A$ has a unique root cycle $\mathfrak{C}$ without zero relations an the other cycles are all saturated.

If $A$ has non saturated cycles, then $A$ has a root cycle and branches satisfiyng the same conditions given in \cite{AS} , by Remark \ref{quiverconrelaciones}. Then $A$ is an iterated tilted algebra of type  $\tAa$ by \cite{AS}.

Let $A$ be a representation infinite $m$-cluster tilted algebra arising from a quiver of type $\tAa$. Then, by Remark \ref{quiverconrelaciones}, $A$ has a unique root cycle $\mathfrak{C}$ without zero relations an the other cycles are all saturated. Then, if we kill the root cycle we get a product of $m$-cluster tilted algebras arising from $m$-cluster categories of type $\Aa$, see  \ref{quiverconrelaciones} and \cite{Mu}. More precisely, $B = A/<\mathfrak{C}> = B_1 \times \cdots \times B_n$ where each $B_i$ an $m$-cluster tilted algebra arising from from a quiver of type $\Aa$. Now, we cut exactly one arrow of each saturated cycle of $B_i$. Let $\alpha_{i_1}, \cdots, \alpha_{i_n}$ be the deleted arrows in each saturated cycle of $B_i$.

More precisely we make an admissible cut in the sense of \cite[Definition 4.6]{BFPPT}. In this way, we obtain $C_i$, an iterated tilted algebra of type $\Aa$ of global dimension at most $m+1$. In fact, by \ref{quiverconrelaciones} each $C_i$ is a gentle algebra which is a tree with at most $m-1$ consecutive relations, then by \cite{AH} we get that $C_i$ is an iterated tilted algebra of type $\Aa$.  Using the description given by \cite{Mu} and \cite[Remark 4.3]{FPT}, we have that $B_i = \mathcal{R}_m(C_i)$.

 Finally, we make the admissible cut, $A/ <\alpha_{i_j}> = B$. Using the description from \cite[Definition 7.5]{Gub} of the quiver with relations of the algebra $A$ and how the algebras $B_i$ are glued with the root cycle, in internal or external way in the sense of \cite{AS}. We get that $B$ is a representation infinite iterated tilted algebra of type $\tAa$
by \cite[Theorem A]{AS}, since the algebras $C_i$ are iterated tilted algebras of type $\Aa$ (branches in the sense of Assem and Skowronski). Observe that global dimension of $B$ is $m+1$. Considering $\mathcal{R}_m(B)$ we recover $A$ since in each branch $C_i$ we recover $B_i$, and the relations outside saturated branches are at most $m-1$. Then $ A = \mathcal{R}_m(B)$.\end{proof}

We illustrate the situation in the following example.

\begin{ejemplo}
Consider the representation infinite $3$-cluster tilted algebra $A$ arising from an $m$-cluster category of type ${\tilde{\mathbb{A}}}_{3,4}$ in Example \ref{repinfiniteAntilde}:

\begin{center}
\begin{tikzcd}[column sep=small]
5 \arrow[rr]&& 3  \\
1 \arrow[rr, shift left,"\beta"]
\arrow[rr, shift right,"\alpha_1"']
 && 2 \arrow[u,"\gamma"]\arrow[rr,"\alpha_2"'] && 4\arrow[d,"\alpha_3"]\\
&& 7\arrow[llu,"\alpha_5"] && 6\arrow[ll,"\alpha_4"] \end{tikzcd}with relations $\alpha_i \alpha_{i+1}=0$ for $i$ integer modulo $6$, and $\beta \gamma=0$.
\end{center}

Consider the admissible cut by the arrow $\alpha_6$, i.e $A/<\alpha_6>$,
 we obtain an iterated tilted algebra of type ${\tilde{\mathbb{A}}}_{3,4}$ of global dimension $4$.

\begin{center}
\begin{tikzcd}[column sep=small]
5 \arrow[rr]&& 3  \\
1 \arrow[rr, shift left,"\beta"]
\arrow[rr, shift right,"\alpha_1"']
 && 2 \arrow[u,"\gamma"]\arrow[rr,"\alpha_2"'] && 4\arrow[d,"\alpha_3"]\\
&& 7 && 6\arrow[ll,"\alpha_4"] \end{tikzcd}
with relations $\alpha_i \alpha_{i+1}=0$, and $\beta \gamma=0$.
\end{center}
\end{ejemplo}

\begin{remark}
The result in Theorem \ref{teorema-extensiones} does not hold for $m$-cluster tilted algebras of finite representation type. 
\end{remark}

\begin{ejemplo}\label{ejemplo 10} Let $A$ be the algebra given by quiver with relations $(Q,I)$ in Figure \ref{figura ejemplo-10}. Then $A$ is representation finite $m$-cluster tilted algebra arising form a quiver of type $\tAa$ which can not be realized as a trivial extension since it is a triangular
algebra, and it is not an iterated tilted algebra of any type since its Auslander--Reiten quiver has oriented cycles.
\end{ejemplo}

\begin{figure}[h!]
\centering
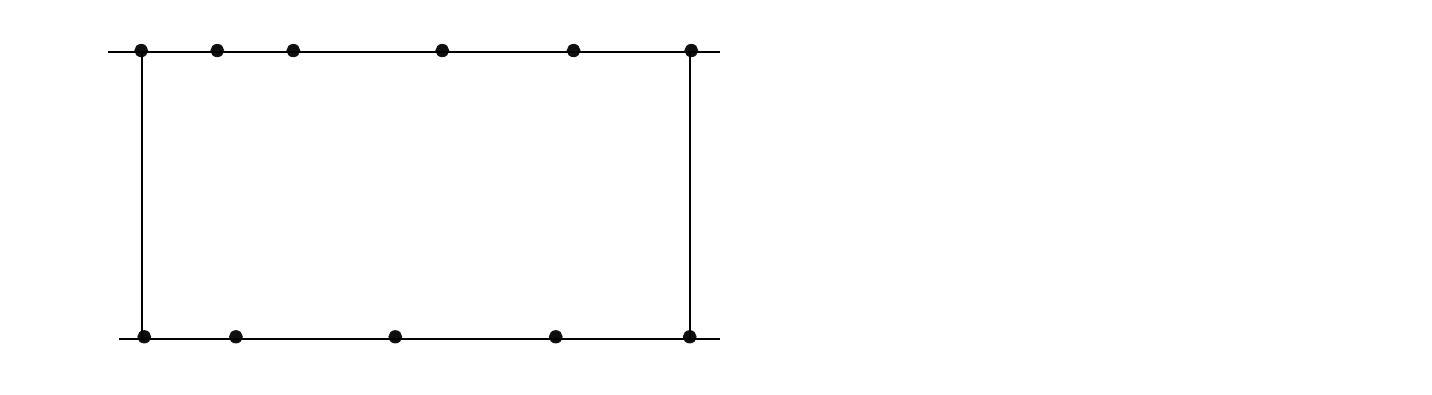
\def\svgwidth{2.8in}
\caption{Example \ref{ejemplo 10}}
\label{figura ejemplo-10}
\end{figure}

\section*{Acknowledgements}
The first and the third author were partially supported by ANPCyT and CONICET. The second author was also supported by FWF grant P30549.
The authors are grateful to the referee for his comments and suggestions which improve the presentation and some results of the paper.

{}


\end{document}